\documentclass[11pt]{amsart}
\usepackage{amsthm}
\usepackage{amssymb}
\usepackage[usenames]{color}
\usepackage{latexsym}
\usepackage{graphicx}
\usepackage{enumerate} 
\usepackage{graphics} 
\usepackage{epstopdf} 
\usepackage[all]{xy}
\usepackage{verbatim} 

\usepackage{setspace}

\usepackage[all]{xy}
\input epsf

\numberwithin{figure}{section}
\newtheorem{thm}{Theorem}[section]
\newtheorem{lem}[thm]{Lemma}
\newtheorem{cor}[thm]{Corollary}
\newtheorem{prop}[thm]{Proposition}

\newtheorem{defn}[thm]{Definition}

\newtheorem*{Wolcott}{Wolcott's Theorem} 

\newtheorem*{Schubert}{Schubert's Theorem} 

\newtheorem*{Thistlethwaite}{Thistlethwaite's Theorem} 

\newtheorem*{Sawollek}{Sawollek's Theorem}

\title{Ravels Arising from Montesinos Tangles}

\author{Erica Flapan}

\address{Department of Mathematics, Pomona College, Claremont, CA 91711, USA}

\author{Allison N. Miller}
\address{Department of Mathematics, University of Texas, Austin, TX 78712, USA}
\subjclass{57M25, 57M15, 05C10}
\keywords{Spatial graphs, almost unknotted graphs, minimally knotted graphs, Brunnian graphs, tangles, ravels, Montesinos tangles}

\thanks{The first author was supported in part by NSF Grant DMS-1607744}

\begin{document}
\begin{abstract} A ravel is a spatial graph which is non-planar but contains no non-trivial knots or links.  We characterize when a Montesinos tangle can become a ravel as the result of vertex closure with and without replacing some number of crossings by vertices.
\end{abstract}

\maketitle

\section{Introduction}
One of the earliest results in spatial graph theory was the discovery by Suzuki \cite{suzuki} in 1970 of an embedding of an abstractly planar graph which had the property that it was non-planar but every subgraph of the embedding was planar.  Note that a graph is said to be {\it abstractly planar} if it can be embedded in $\mathbb{R}^2$, and a particular  embedding of a graph in $\mathbb{R}^3$ is said to be {\it planar} if there is an ambient isotopy of it into $\mathbb{R}^2\subseteq \mathbb{R}^3$.  Two years after Suzuki's result, Kinoshita \cite{kinoshita} found an embedding of a $\theta_3$ graph which had this property. Many results about such embeddings have since been obtained, though several different terms are used to refer to them.  In particular, we have the following definition.

\begin{defn}  An embedding $G$ of an abstractly planar graph in $\mathbb{R}^3$  is said to be \emph{almost unknotted} (equivalently \emph{almost trivial}, \emph{minimally knotted}, or \emph{Brunnian}) if  $G$ is non-planar but $G- \{e\}$ is planar  for any edge $e$ of $G$. 
 \end{defn}
 
One of the most significant results in the study of almost unknotted graphs is the result obtained by Kawauchi \cite{kawauchi} and Wu \cite{wu} that every abstractly planar graph without valence one vertices has an almost unknotted embedding.

We are now interested in a larger class of embedded graphs defined below. 
 
\begin{defn}{}
An embedding $G$ of an abstractly planar graph in $\mathbb{R}^3$  is said to be a \emph{ravel} if $G$ is non-planar but  contains no non-trivial knots or links. 
\end{defn}

Any almost unknotted graph $G$ is a ravel, unless $G$ is topologically a non-trivial knot or a Brunnian link.  However, the converse is not true. For example, starting with an almost unknotted embedding of a graph $G$, add an additional edge $e'$ parallel to an existing edge $e$ to get a new embedded graph $G'$. Since $G$ was almost unknotted, $G'$ will contain no non-trivial knots or links.  However, the removal of the edge $e'$ will not make $G'$ planar, and hence $G'$ is a ravel that is not almost unknotted.

The term {\it ravel} was originally coined as a way to describe hypothetical molecular structures whose complexity results from
``an entanglement of edges around a vertex that contains no knots or links" \cite{castle08}.  The first molecular ravel to be identified was a metal-ligand complex synthesized by Feng Li et al in 2011 \cite{Li2011}.  In order to formalize the notion of entanglement about a vertex, we require the ``entanglement'' to be properly embedded in a ball.  If we bring the endpoints of the edges in the boundary sphere together into a single vertex, we obtain a spatial graph which is known as the {\it vertex closure} $V(T)$ of the entanglement $T$.  In Figure~\ref{chemravel}, we illustrate an entanglement whose vertex closure is a ravel. 

\begin{figure}[http]
\begin{center}
\includegraphics[width=5cm]{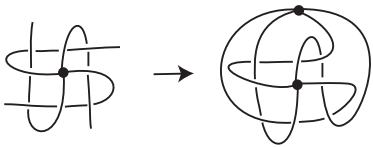}
\caption{The entanglement on the left becomes a ravel when the endpoints are brought together on the right.}
\label{chemravel}
\end{center}
\end{figure}

In this paper, we characterize when a Montesinos tangle can become a ravel as the result of vertex closure with and without replacing some number of crossings by vertices.  In particular, our main results are the following.

\newtheorem*{theorem1}{Theorem \ref{theorem1}}
\begin{theorem1}
Let $T=T_1+\dots +T_n$ be a Montesinos tangle such that $n$ is minimal and not both $T_1$ and $T_n$ are trivial vertical tangles.  If $T$ is rational, then the vertex closure $V(T)$ is planar. If $T$ is not rational and some rational subtangle $T_i$ has $\infty$-parity, then $V(T)$ contains a non-trivial knot or link. Otherwise, $V(T)$ is a ravel. \end{theorem1}

\newtheorem*{monthm}{Theorem \ref{monthm}}
\begin{monthm}
Let $T=T_1+\dots +T_n$ be a projection of a Montesinos tangle in standard form, and let $T'$ be obtained from $T$ by replacing at least one crossing by a vertex. Then the vertex closure $V(T')$ is a ravel if and only if $T'$ is an exceptional vertex insertion. 
\end{monthm}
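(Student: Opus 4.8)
The plan is to prove both implications by reducing the question of which knots and links $V(T')$ contains to a finite collection of rational-tangle closures, each of which is classified by Schubert's Theorem. First I would make this reduction precise. Since $T'$ is obtained from the standard projection $T=T_1+\dots+T_n$ by replacing some crossings with $4$-valent vertices, the vertex closure $V(T')$ is a spatial graph all of whose vertices are $4$-valent: the closure vertex $v_0$ coming from the four tangle endpoints, together with one vertex for each inserted crossing. Any knot or link carried by $V(T')$ is a disjoint union of simple closed curves in the graph, and at every $4$-valent vertex such a curve uses exactly two of the four incident edge-ends. Thus each knot or link in $V(T')$ arises by choosing, at each inserted vertex, one of its planar smoothings, and at $v_0$ one of the pairings of the four tangle ends; smoothing the inserted vertices turns $T'$ back into an honest tangle built from $T$, and the pairing at $v_0$ selects a numerator- or denominator-type closure. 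Consequently the set of knots and links in $V(T')$ equals the union, over all smoothings of the inserted vertices, of the closures of the resulting sums of rational tangles.

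\emph{Forward direction.} Assume $T'$ is an exceptional vertex insertion. I would first argue non-planarity by exhibiting a smoothing whose associated closure, or the graph $V(T')$ itself, carries a non-planar obstruction — for instance by applying Theorem~\ref{theorem1} to the underlying tangle or by showing that some rational summand fails to simplify past the inserted vertex. To see that $V(T')$ contains no non-trivial knot or link, I would run through the finitely many smoothings produced by the reduction above. Each smoothing of an inserted crossing replaces the corresponding twist region of some $T_i$ by a $0$- or $\infty$-tangle, changing the continued fraction of $T_i$ in a controlled way, and the resulting tangle is again a sum of rational tangles. Its numerator and denominator closures are therefore $2$-bridge links (or Montesinos links in the non-rational cases), whose triviality is decided by Schubert's Theorem in the rational cases and by the methods behind Theorem~\ref{theorem1} otherwise. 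The content of the exceptional condition is exactly that every such closure collapses to the unknot or unlink, which I would verify according to the parities of the summands.

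\emph{Reverse direction.} Suppose $T'$ is not an exceptional vertex insertion. In each failure case I would produce either a non-trivial knot or link inside $V(T')$ or a proof that $V(T')$ is planar. Organizing by how the insertion fails — the wrong number of inserted vertices, inserted vertices in the wrong summands, or the wrong parity relationship among the $T_i$ — I would, in the knotted cases, choose a specific smoothing of the inserted vertices so that the associated closure is a rational tangle whose fraction Schubert's Theorem certifies to be a non-trivial $2$-bridge knot or link; in the remaining cases I would show that the inserted vertices can be absorbed so that $V(T')$ is ambient isotopic into the plane, comparing again against Theorem~\ref{theorem1}.

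The main obstacle I anticipate is the triviality verification in the forward direction: although the reduction makes the family of candidate knots and links finite, this family is governed by the interaction between the smoothing choices and the parities of the rational summands, and one must check that the exceptional condition forces all members of the family to be trivial simultaneously. Controlling this interaction — and in particular ruling out a single bad smoothing that yields a non-trivial $2$-bridge link — is the crux, and it is Schubert's Theorem together with the minimality and standard-form hypotheses that should keep the case analysis finite and decisive.
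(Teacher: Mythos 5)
Your proposal rests on a reduction that is false, and it is missing the one tool that can actually prove non-planarity.

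First, the reduction. A simple closed curve in $V(T')$ that passes through a $4$-valent vertex uses two of the four incident edge-ends, but it may use \emph{any} two of them --- in particular the diagonal pairs (NW--SE or NE--SW), which do not correspond to either planar smoothing; moreover, the edges not used by the curve are simply absent from the resulting knot or link. So the set of knots and links contained in $V(T')$ is \emph{not} ``the union, over all smoothings of the inserted vertices, of the closures of the resulting sums of rational tangles.'' This is not a technicality: the diagonal passages are exactly the curves that carry the knotting. For instance, in the $\theta_4$ case of Theorem~\ref{onecase}, the non-trivial knot that always exists runs diagonally through the inserted vertex, essentially reconstituting the strands of the original crossing; your smoothing-only family would never see it, so your ``reverse direction'' case analysis would certify graphs as unknotted that in fact contain non-trivial knots. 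The paper's Proposition~\ref{forward} instead builds explicit cycles through the vertices (via Lemma~\ref{pathfinding}, Lemma~\ref{nofinal}, and Observations~1--2) and certifies their non-triviality with Wolcott's, Schubert's, and Thistlethwaite's theorems; note also that the ``Schubert's Theorem'' available here is the connected-sum statement, not the $2$-bridge fraction classification your sketch seems to invoke.

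Second, and more fundamentally, your plan for non-planarity in the direction ``exceptional $\Rightarrow$ ravel'' cannot work even in principle. By definition a ravel contains no non-trivial knots or links, so \emph{no} knot or link contained in $V(T')$ --- and no smoothing-derived closure --- can witness its non-planarity; and Theorem~\ref{theorem1} together with Lemma~\ref{rat0lem} applies only to vertex closures of tangles without inserted vertices, so ``applying Theorem~\ref{theorem1} to the underlying tangle'' says nothing about the graph $V(T')$. What the paper actually uses is Sawollek's theorem: the set of \emph{associated links} $S(G)$, obtained by replacing a neighborhood of each vertex by an \emph{arbitrary} rational tangle (a strictly larger family than smoothings), is an isotopy invariant of a $4$-valent spatial graph. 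The paper deletes the loops $c_k$ to get a subgraph $G$, observes that every associated link of a planar embedding $G_0$ of the abstract graph is a $2$-bridge link or a connected sum of two such, and then chooses tangles $P,Q$ so that $S(G)$ contains a Montesinos link whose double branched cover is an irreducible Seifert fibered space with three exceptional fibers and infinite fundamental group --- hence prime and not $2$-bridge. That contradiction is what forces $G$, and so $V(T')$, to be non-planar. Nothing in your proposal supplies an invariant of this kind, so the crux of the backward implication is absent, not merely deferred.
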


While we postpone defining an exceptional vertex insertion until Section~\ref{exception}, Theorem~\ref{monthm} has the following more easily stated corollary.

\newtheorem*{arborescentclosure}{Corollary \ref{arborescentclosure}}
\begin{arborescentclosure}
Let $T=T_1+\dots +T_n$ be a projection of a Montesinos tangle in standard form, and let $T'$ be obtained from $T$ by replacing at least one crossing with a vertex. If the vertex closure $V(T')$ is a ravel, then precisely one $T_i$ has $\infty$-parity.
\end{arborescentclosure}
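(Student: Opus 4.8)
The plan is to deduce the corollary directly from Theorem~\ref{monthm}. Since $V(T')$ is assumed to be a ravel, Theorem~\ref{monthm} tells us that $T'$ is an exceptional vertex insertion, so the corollary follows once we check, purely from the definition to be given in Section~\ref{exception}, that every exceptional vertex insertion of a standard-form Montesinos tangle has exactly one summand $T_i$ of $\infty$-parity. In this sense the real content of the corollary is a definitional unwinding, since the bulk of the topology has already been absorbed into Theorem~\ref{monthm}. I would nonetheless want to understand \emph{why} the count comes out to one, and for that I would argue the contrapositive directly, using the parity mechanics that also drive Theorem~\ref{theorem1}.

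First I would recall the role that $\infty$-parity plays in the vertex closure. In the sum $T_1+\dots+T_n$ the summands are juxtaposed from left to right, so a summand of $0$-parity connects its left endpoints to its right endpoints and lets the two strands pass horizontally through it, whereas a summand of $\infty$-parity joins its two left endpoints together and its two right endpoints together, capping off the horizontal flow. After the vertex closure identifies the four outer endpoints to a single vertex, the constituent knots and links of $V(T')$ are precisely the closures cut out by this flow pattern. The analysis behind Theorem~\ref{theorem1} shows that, in the absence of any vertex insertion, each $\infty$-parity cap contributes a non-trivial constituent, which is exactly why a ravel there requires no $\infty$-parity summand at all; the effect of inserting vertices is to shift this count by one.

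Concretely, I would establish the claim by a two-sided count. If no $T_i$ has $\infty$-parity, then both strands run horizontally across all of $T'$, the inserted vertices lie on these horizontal strands, and one expects the closure to reduce to a planar diagram, so $V(T')$ fails to be non-planar and cannot be a ravel. If two or more summands $T_i,T_j$ have $\infty$-parity, then their caps, together with the outer closure, bound a non-trivial constituent knot or link exactly as in Theorem~\ref{theorem1}; since the inserted vertices only replace isolated crossings, one shows they cannot simultaneously trivialize every routing through both caps, and hence $V(T')$ again is not a ravel. Together these two exclusions force the number of $\infty$-parity summands to be exactly one.

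I expect the middle case to be the main obstacle: showing that a single $\infty$-parity summand, combined with an exceptional insertion, produces a graph that is non-planar yet still contains no non-trivial knot or link. This delicate interaction is precisely what the definition of an exceptional vertex insertion is built to capture, and it is what powers Theorem~\ref{monthm}. Consequently the corollary itself reduces to the short deduction above together with the definitional check, and the substantive topological work lives upstream in Section~\ref{exception} and in the proof of Theorem~\ref{monthm}.
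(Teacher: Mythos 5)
Your opening deduction is precisely the paper's proof: the paper states that Theorem~\ref{monthm} immediately implies this corollary, since condition (1) of Definition~\ref{exceptional} (exceptional vertex insertion) explicitly requires that precisely one $T_j$ has $\infty$-parity, so nothing beyond that definitional check is needed. The supplementary contrapositive sketch you add is superfluous and not fully correct --- in particular, zero $\infty$-parity summands does not force $V(T')$ to be planar, since such a closure can instead contain a non-trivial knot (as in Theorem~\ref{onecase}); it only forces $V(T')$ to fail to be a ravel --- but since you explicitly subordinate that sketch to the main deduction, your proof stands as written.
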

\bigskip


\section{Background}

For completeness we include some well-known definitions and results about knots, links, and tangles.

\begin{defn} \label{ratdef}
A $2$-string tangle $T$ in a ball $B$ is said to be \emph{rational} if there is an ambient isotopy of $B$ setwise fixing $\partial B$ that takes $T$ to a trivial tangle. 
\end{defn}

\begin{defn}
The \emph {sum} and \emph{product} of tangles $R$ and $S$ are shown in Figure~\ref{tanglesum}. 
\end{defn}

\begin{figure}[http]
\begin{center}
\includegraphics[width=5cm]{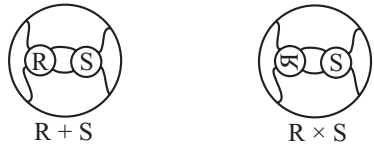} 
\caption{The sum and product of tangles $R$ and $S$.}
\label{tanglesum}
\end{center}
\end{figure}

\begin{defn}
A tangle is said to be \emph{Montesinos} if it can be written as the sum of finitely many rational tangles. 
A tangle is said to be \emph{algebraic} or \emph{arborescent} if it can be written in terms of sums and products of finitely many rational tangles. 
\end{defn}

Note that Montesinos tangles and algebraic tangles are not necessarily $2$-string tangles since they may contain simple closed curves in addition to the two strings.

\begin{defn}
Let $T$ be a 2-string tangle.  The knot or link obtained by joining the NE and SE points together and the NW and SW points together is called the {\em denominator closure of $T$}, and denoted by $D(T)$.  The knot or link obtained by joining the NW and NE points together and the SW and SE points together is called the {\em numerator closure of $T$}, and denoted by $N(T)$. 
 \end{defn}

\begin{defn}
A $2$-string tangle is said to have \emph{$\infty$-parity} if the NW and SW boundary points are on the same strand, and  \emph{$0$-parity} if the NW and NE boundary points are on the same strand. \end{defn}

\begin{defn}  Any tangle obtained from a trivial horizontal tangle by twisting together the NE and SE ends is said to be a \emph{horizontal} tangle.
\end{defn}

We will use the following results repeatedly in our proofs.

\begin{Wolcott}\label{denom} \cite{Wolcott86}
Let $T$ be a rational tangle.  Then $D(T)$ is the unknot if and only if $T$ is a horizontal tangle; and $D(T)$ is an unlink if and only if $T$ is a trivial vertical tangle. 
\end{Wolcott}

\begin{Schubert}\cite{schubert} \label{spherethm2} 
Let $L_1$ and $L_2$ be knots or links.  Then $L_1\#L_2$ is trivial if and only if both $L_1$ and $L_2$ are trivial.\end{Schubert}

\begin{Thistlethwaite} \label{thistle}  \cite{thistle87}
A reduced alternating projection of a link has the minimum number of crossings.
\end{Thistlethwaite}

\bigskip


\section{Vertex closure of rational and Montesinos tangles}

\begin{defn}
Let $T$ be a 1-string or 2-string tangle (possibly with additional closed components) in a ball $B$. The embedded graph $V(T)$ obtained by bringing the endpoints of the string(s) together into a single vertex $w$ in $\partial B$ is said to be the \emph{vertex closure of $T$} and $w$ is said to be the \emph{closing vertex}. 
\end{defn}

We begin with the following observation about when the vertex closure of a tangle is planar. 

\begin{lem}\label{rat0lem}
Let $T$ be a 2-string tangle with tangle ball $B$. Then the vertex closure $V(T)$ of $T$ is planar if and only if $T$ is rational.
\end{lem}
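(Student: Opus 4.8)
The plan is to use the standard fact that a spatial graph is planar if and only if it lies on an embedded $2$-sphere in $S^3$: one direction is immediate (the standard plane together with the point at infinity is such a sphere), and the converse follows from the Schoenflies theorem, since any embedded $2$-sphere in $S^3$ is unknotted and hence can be isotoped onto the standard plane, carrying any graph lying on it into the plane. I will use throughout that a $2$-string tangle consists of exactly two arcs with no additional closed components (this is the convention distinguishing $2$-string tangles from the Montesinos tangles discussed after Definition~\ref{ratdef}), so that $V(T)$ is a bouquet of two loops based at the closing vertex $w$. Write $V(T)=T\cup\sigma$, where $\sigma\subset\partial B$ is the ``spider'' consisting of four arcs joining the boundary points $\mathrm{NW},\mathrm{NE},\mathrm{SW},\mathrm{SE}$ to $w$.

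For the direction that rationality implies planarity, I would first use Definition~\ref{ratdef} to isotope $T$ to a trivial tangle $T_0$; since the ambient isotopy fixes $\partial B$ setwise it carries the four boundary points to four boundary points, and hence carries $V(T)$ to a vertex closure of $T_0$. Thus it suffices to treat $T_0$. The two arcs of $T_0$ lie on a properly embedded disk $D_0\subset B$ whose boundary is an equatorial circle of $\partial B$ through the four endpoints, and I would choose the spider $\sigma$ to lie in one of the two hemispheres of $\partial B$ cut off by this equator. Then $V(T_0)$ lies on the $2$-sphere obtained by capping $D_0$ with that hemisphere, so $V(T_0)$, and therefore $V(T)$, is planar.

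For the converse, suppose $V(T)$ is planar and fix an embedded $2$-sphere $S\supset V(T)$. The goal is to arrange that $S$ meets $\partial B$ in a single circle bounding a disk $D=S\cap B$ with $T\subset D$. After making $S$ transverse to $\partial B$ away from the shared spider, the intersection $S\cap\partial B$ consists of $\sigma$ together with finitely many circles, and I would remove the superfluous circles by an innermost-disk argument: an innermost circle bounds a disk on $S$ disjoint from $V(T)$, which by irreducibility of $S^3$ can be used to isotope $S$ across $\partial B$ without disturbing $V(T)$, reducing the number of intersection circles. Once $D=S\cap B$ is a single properly embedded disk containing $T$, I would use that every properly embedded disk in a ball is unknotted, hence isotopic rel setwise $\partial B$ to an equatorial disk $D_0$. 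This isotopy carries $T\subset D$ to two disjoint flat arcs in $D_0$, which form a crossingless, and therefore trivial, tangle; hence $T$ is rational.

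The main obstacle I anticipate is the cleanup in the converse. Because the spider $\sigma$ and the four tangle endpoints force $S$ and $\partial B$ to share the $1$-complex $\sigma$ rather than meet transversally, I must perturb $S$ near $w$ and along the legs of $\sigma$ with care before applying the innermost-disk surgeries, and I must verify that each surgery can be chosen disjoint from $V(T)$ so that the resulting disk $D$ genuinely contains the whole tangle $T$. An alternative that sidesteps the transversality bookkeeping is to isotope all of $S^3$ so that $V(T)$ lies flat in the standard plane and then argue that the homeomorphic image pair $(\Phi(B),\Phi(T))$, now with both strings flat, is a trivial tangle; in that formulation the crux becomes showing that a tangle whose strings lie in a plane is rational once the (tame) tangle ball is put into standard position.
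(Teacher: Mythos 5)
Your forward direction is essentially fine (and is consistent with the paper's view of rationality), but the converse contains a genuine gap, and it is not the transversality bookkeeping you flagged. The target configuration of your cleanup --- ``$S$ meets $\partial B$ in a single circle bounding a disk $D=S\cap B$ with $T\subset D$'' --- is not merely delicate to arrange; it is impossible. By your own setup the spider $\sigma$ lies in $\partial B$, and it also lies on $S$ throughout, since $\sigma\subset V(T)\subset S$ and all of your surgeries fix $V(T)$. Hence $\sigma\subseteq S\cap\partial B$ at every stage. But a circle is a $1$-manifold, so it cannot contain the $4$-valent point $w$: near $w$ the spider has four local branches, whereas a point of a circle has at most two. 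Therefore $S\cap\partial B$ is never a single circle, $S\cap B$ is never a properly embedded disk, and your endgame (``properly embedded disks in a ball are standard'') never becomes applicable. No perturbation near $\sigma$, however careful, removes this obstruction, because the obstruction is the spider itself, not a failure of general position.

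The strategy can be repaired, but it needs a different finishing idea. What the innermost-circle argument can legitimately achieve is $S\cap\partial B=\sigma$ exactly, with $S$ tangent to $\partial B$ from the inside along $\sigma$; since $S\setminus\sigma$ is connected and misses $\partial B$, this forces $S\subset B$. Now view $V(T)$ as a figure-eight embedded in the sphere $S$: its complement in $S$ consists of three open disks, and the closure of the disk bounded by a single loop of $V(T)$ is, for each strand $t$ of $T$, a disk in $B$ realizing a parallelism rel endpoints from $t$ to the path in $\sigma\subset\partial B$ joining $\partial t$ through $w$. These two disks meet only at $w$, and after a small push-off they are disjoint, which exhibits $T$ as a trivial, hence rational, tangle. (Alternatively, shrink $B$ slightly so that $\sigma$ lies outside the smaller ball; there $S$ does meet the boundary sphere transversally in circles through the four strand endpoints, and a version of your plan can be run, though one must still rule out the case where $S$ intersects the smaller ball in an annulus or pair of pants rather than disks.) Either way, a key idea is missing as written; this technical overhead is precisely what the paper's proof sidesteps by arguing instead that isotopies of $B$ fixing $\partial B$ setwise correspond to motions of the edges of $V(T)$ about the closing vertex.
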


\begin{proof}
It follows from the definition of rational that $T$ is rational if and only if it can be made planar by moving the endpoints of the strands of $T$ around in $\partial B$. However, moving the endpoints of the strands around in $\partial B$ corresponds  to moving the edges of $V(T)$ about the closing vertex.  So $T$ is rational if and only if $V(T)$ can be made planar by moving the edges of $V(T)$ about  the vertex. 
\end{proof}

The following theorem characterizes when the vertex closure of a Montesinos tangle is a ravel.

\begin{thm}\label{theorem1}
Let $T=T_1+\dots +T_n$ be a Montesinos tangle such that $n$ is minimal and not both $T_1$ and $T_n$ are trivial vertical tangles.  If $T$ is rational, then the vertex closure $V(T)$ is planar.  If $T$ is not rational and some rational subtangle $T_i$ has $\infty$-parity, then $V(T)$ contains a non-trivial knot or link. Otherwise, $V(T)$ is a ravel. 
\end{thm}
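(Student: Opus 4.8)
The plan is to treat the three cases in turn after first recording the structure of $V(T)$. The first case is immediate: if $T$ is rational then Lemma~\ref{rat0lem} gives that $V(T)$ is planar. For the remaining cases, observe that $V(T)$ is a graph with a single $4$-valent vertex $w$, the two strands of $T$ becoming loops at $w$ and any further closed components of $T$ persisting as disjoint circles. The simple cycles of this graph are exactly the two loops together with the closed components; since both loops contain $w$, a non-trivial link in $V(T)$ must use a closed component. I will use throughout that each strand of a rational tangle is boundary-parallel in its ball (it is the image of a boundary-parallel strand of a trivial tangle under a homeomorphism fixing $\partial B$ setwise); consequently any cycle of $V(T)$ that runs through a single strand of each $T_i$ it meets is unknotted, and more generally a cycle obtained by closing off a sub-sum is isotopic to the corresponding numerator or denominator closure with honest closing arcs.

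For the ``otherwise'' case, suppose no $T_i$ has $\infty$-parity. Then every strand runs from the left side of $T$ to the right side across the summands, so $V(T)$ has no closed components and consists of precisely two loops meeting only at $w$. By the boundary-parallel observation each loop is unknotted, so $V(T)$ contains no non-trivial knot; and as the two loops share $w$ there are no vertex-disjoint cycles and hence no links. Since $T$ is not rational, Lemma~\ref{rat0lem} shows $V(T)$ is non-planar, so $V(T)$ is a ravel.

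For the case that some $T_i$ has $\infty$-parity (with $T$ not rational, so $n\ge 2$), the key point is that an $\infty$-parity summand acts as a vertical cap: its left strand closes off the sub-sum to its left and its right strand closes off the sub-sum to its right. Minimality of $n$ forbids any summand from being horizontal (a horizontal summand could be absorbed into a neighbor, since adding horizontal twists preserves rationality) and forbids any interior trivial vertical summand, so that by hypothesis at most one summand, necessarily at an end, is a trivial vertical tangle. I would then split into two subcases. If some maximal block of non-$\infty$-parity summands is non-empty, its denominator closure is realized in $V(T)$ (as a closed component, or as a cycle through $w$ for an end block) and equals a connected sum $\#_l D(T_l)$; by Wolcott's Theorem each $D(T_l)$ is a non-trivial knot (as $T_l$ is neither horizontal nor trivial vertical), so by Schubert's Theorem the closure is a non-trivial knot in $V(T)$. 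If instead every summand has $\infty$-parity, then minimality together with the hypothesis that not both ends are trivial vertical produces a summand $T_j$ that is $\infty$-parity but not trivial vertical; by Wolcott's Theorem its two strands are non-trivially linked, and these strands lie in two vertex-disjoint components of $V(T)$, yielding a non-trivial link.

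I expect the bookkeeping in the $\infty$-parity case to be the main obstacle: one must verify precisely which sub-sum closure is realized in $V(T)$ and isotope the boundary-parallel capping strands to honest closing arcs so that the realized cycle is genuinely the claimed $N$- or $D$-closure, and one must check non-triviality in every configuration. It is exactly here that the hypotheses do their work, ruling out the degenerate trivializations (horizontal summands, interior trivial vertical summands, and both end summands being trivial vertical) that would otherwise collapse the closure to an unknot or unlink. Where the connected-sum identity $D(T_a+\dots+T_b)=\#_l D(T_l)$ needs care, for instance for diagonal-parity summands, I would instead put the block into a reduced alternating diagram and invoke Thistlethwaite's Theorem to conclude non-triviality from the crossing number.
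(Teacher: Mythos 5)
Your proposal is correct, and its overall skeleton matches the paper's: Lemma~\ref{rat0lem} disposes of the rational case and supplies non-planarity in the ravel case, the ravel case itself is settled by writing each loop as a connected sum of unknotted single-strand closures, and the knot/link case rests on Wolcott's and Schubert's Theorems. Where you genuinely diverge is in how the $\infty$-parity case is decomposed. The paper fixes the rightmost $\infty$-parity summand $T_i$ and argues in three subcases ($i<n$; $i=n$ with no other $\infty$-parity summand; $i=n$ with another one), each time producing a cycle or pair of cycles containing $D(T_n)$ or $D(T_1)$ as a connected summand; crucially, the capping strand and all intermediate summands are absorbed into an unidentified ``possibly trivial knot'' factor, so the paper never needs to straighten any cap. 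You instead split according to whether some summand lacks $\infty$-parity: a non-empty maximal block of such summands closes up to a cycle isotopic to $\#_l D(T_l)$, a non-trivial knot since minimality rules out horizontal summands, while in the all-$\infty$-parity case the hypothesis on $T_1,T_n$ hands you a non-trivial-vertical summand $T_j$ whose two strands close to disjoint cycles forming $D(T_j)$, a non-trivial link. Your route requires the extra observation you state---strands of rational tangles are boundary-parallel via disks missing the other strand, so the caps can be traded for honest closure arcs---which is true but is exactly the bookkeeping the paper's absorption trick avoids; in exchange, your split treats the knot and link outcomes symmetrically and makes more transparent where each hypothesis enters. Two small points. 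First, your assertion that minimality forbids interior trivial vertical summands is never actually needed: block summands cannot be trivial vertical (trivial vertical tangles have $\infty$-parity), and in the all-$\infty$-parity case the hypothesis alone supplies $T_j$; drop the claim rather than leave it unproved. Second, your worry about $D(T_a+\dots+T_b)=\#_l D(T_l)$ for diagonal parities is unfounded, since the sphere separating consecutive summands always meets the denominator closure in exactly two points, so the identity holds for all parities---fortunately, because the proposed Thistlethwaite fallback is itself delicate: a sum of alternating $3$-braid diagrams need not be an alternating diagram, as the paper notes when defining standard form.
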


\begin{proof}  We know by Lemma~\ref{rat0lem} that if $T$ is rational, then $V(T)$ is planar.  So we assume that $n>1$.  Since $n$ is minimal, none of the $T_i$ is horizontal. Without loss of generality we can assume that $T_n$ is not a trivial vertical tangle.

First suppose that at least one of the rational subtangles has $\infty$-parity.  Let $T_i$ be the rightmost such tangle in the  sum $T=T_1+\dots +T_n$.  Thus both ends of the NE-SE strand of $T_i$ can be extended to the right until they are joined together at the closing vertex $w$, giving us a loop $L_1$ (illustrated in grey on the left in Figure~\ref{closureprojection}), though we may have $i=n$.

\begin{figure}[http]
 \begin{center}
\includegraphics[width=12cm]{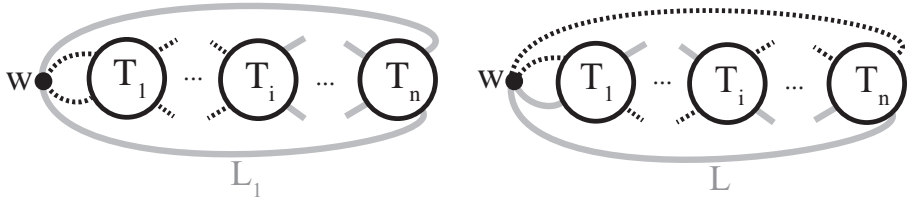}
\caption{On the left $T_i$ has $\infty$-parity and on the right no subtangle has $\infty$-parity. }
\label{closureprojection}
\end{center}
\end{figure}

  Suppose that $i<n$.  Then the loop $L_1$ is a connected sum of  the denominator closure $D(T_{n})$ together with a (possibly trivial) knot to the left.  Since $T_{n}$ does not have $\infty$-parity,  $D(T_{n})$ has a single component.  Now since $T_n$ is not horizontal, by Wolcott's Theorem $D(T_{n})$ is a non-trivial knot, and hence by  Schubert's Theorem, $L_1$ is a non-trivial knot contained in $V(T)$.   
  
Next suppose that $i=n$ and $T_n$ is the only $T_j$ with $\infty$-parity. Then we can extend both ends of the NW-SW strand of $T_n$ to the left until they join together at the closing vertex $w$.  We denote this loop by $L_2$.  Since $n>1$, $L_2$ is the connected sum of $D(T_1)$ and another (possibly trivial) knot.  Now since $T_1$ is not horizontal, by Walcott's Theorem and Schubert's Theorem $L_2$ is a non-trivial knot in $V(T)$.

Now suppose that some rational subtangle in addition to $T_n$ has $\infty$-parity.   Let $T_i$ be the tangle with $\infty$-parity that is closest to $T_n$. Then both ends of the NE-SE strand of $T_n$ can be extended rightward to $w$ to obtain a loop $L_1$; and both ends of the NW-SW strand of $T_n$ can be extended leftward until they are joined together in $T_i$, to obtain a loop $L_2$. Then the link $L=L_1\cup L_2$ is the connected sum of $D(T_n)$ with some possibly trivial knot.  Now since $T_n$ is  not a trivial vertical tangle, by Wolcott's Theorem and Schubert's Theorem, $L$ is a non-trivial link in $V(T)$.

Finally, suppose that no $T_i$ has $\infty$-parity.  Then $V(T)$ is an embedding of the wedge of two circles and hence $V(T)$ cannot contain a two component link.  Let $L$ denote the vertex closure of a single strand of $T$.  Since no $T_i$ has $\infty$-parity, $L$ passes through each $T_i$ exactly once, as illustrated by the grey arcs  on the right side of Figure~\ref{closureprojection}.  Now since each $T_i$ is rational, by Lemma \ref{rat0lem}, each individual $V(T_i)$ is planar.  It follows that the vertex closure of each of the single strands $T_i\cap L$ is unknotted. Now the loop $L$ is the connected sum of the loops $V(T_1 \cap L)$,  \dots, $V(T_n \cap L)$, each of which is unknotted.  Hence $L$ is a trivial knot.  Thus $V(T)$ contains no non-trivial knots or links.  However, since $T$ is not rational, we know by Lemma \ref{rat0lem} that  $V(T)$ is non-planar. Hence in this case $V(T)$ is a ravel.  
\end{proof}

The tangle in Figure~\ref{twoinfty} illustrates why Theorem~\ref{theorem1} has the hypothesis that not both $T_1$ and $T_n$ are trivial vertical tangles.  In this case, $V(T_1+T_2)$ is planar even though $T_1+T_2$ is a non-rational Montesinos tangle.

\begin{figure}[http]
\begin{center}
\includegraphics[width=4.5cm]{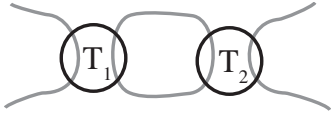}
\caption{A Montesinos tangle where both $T_1$ and $T_2$ are trivial vertical tangles.}
\label{twoinfty}
\end{center}
\end{figure}

\begin{cor}\label{arborescentclosure}
Let $T$ be a non-rational algebraic tangle written as the sum and product of rational tangles $T_1, \dots, T_n$ and either $n>2$ or not both $T_1$ and $T_2$ are trivial vertical tangles.  If each strand of $T$ passes through each $T_i$ exactly once, then $V(T)$ is a ravel.
\end{cor}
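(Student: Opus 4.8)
The plan is to mirror the final case in the proof of Theorem~\ref{theorem1}, where no $T_i$ had $\infty$-parity, now with the linear sum replaced by the tree structure of an algebraic tangle. I would realize $T$ in standard form so that each rational summand or factor $T_i$ occupies a disjoint sub-ball $B_i\subseteq B$, with the arcs outside $\bigcup_i B_i$ joining the boundary points of the $B_i$ (and of $\partial B$) forming a crossing-free pattern dictated by the sums and products. There are three things to establish: that $V(T)$ contains no non-trivial link, that it contains no non-trivial knot, and that it is non-planar.

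First I would dispose of links and planarity. Since each strand of $T$ passes through each $T_i$ exactly once, there are no closed components and each of the two strings closes up to a loop based at the single vertex $w$; thus $V(T)$ is an embedding of a wedge of two circles. Its only cycles are the two loops, and these meet only at $w$, so there are no two disjoint cycles and hence no link of more than one component. For planarity, the threading hypothesis ensures that $T$ is a genuine $2$-string tangle (avoiding degenerate closed-component situations such as that of Figure~\ref{twoinfty}, which the side hypothesis also excludes), so Lemma~\ref{rat0lem} applies and the assumption that $T$ is non-rational gives that $V(T)$ is non-planar.

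The substance is to show that each of the two loops is unknotted. Let $s$ be one of the two strings and let $L=V(s)$ be its closure at $w$. Because $s$ meets each $T_i$ exactly once, the sphere $\partial B_i$ meets $s$ in exactly two points, so $\{\partial B_i\}_{i=1}^{n}$ is a system of disjoint decomposing spheres exhibiting $L$ as an iterated connected sum $L=K_0\#K_1\#\cdots\#K_n$. For $i\geq 1$, the summand $K_i$ is the closure of the single arc $s\cap B_i$ inside the rational tangle $T_i$; since $T_i$ is rational, Lemma~\ref{rat0lem} gives that $V(T_i)$ is planar, and $K_i$ is a cycle in this planar embedding, hence unknotted. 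The remaining summand $K_0$ lies in the complement of $\bigcup_i B_i$, where $s$ consists only of the crossing-free connecting arcs together with the closing arc at $w$, and is therefore also unknotted. By Schubert's Theorem $L$ is the unknot, and the identical argument applies to the other string. Consequently $V(T)$ contains no non-trivial knots or links, and being non-planar it is a ravel.

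The hard part will be the connected-sum decomposition in the presence of products: unlike the linear arrangement of a Montesinos sum, where the decomposing spheres are parallel vertical $2$-spheres between consecutive summands, for a general algebraic tangle one must arrange the $T_i$ in simultaneously disjoint balls and verify that the arcs of $s$ outside these balls---the arborescent skeleton encoding the sums and products---carry no crossings and so contribute only an unknotted summand $K_0$. Care is also needed to confirm that the hypothesis that each strand passes through each $T_i$ exactly once genuinely forces the absence of closed components and makes every $\partial B_i$ a two-point decomposing sphere for each string.
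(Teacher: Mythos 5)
Your proposal is correct and is essentially the paper's own argument: the paper merely observes that the hypotheses force $V(T)$ to be a wedge of two circles and then cites the final case of the proof of Theorem~\ref{theorem1}, which is exactly what you spell out --- each loop is a connected sum of closures of the single strands passing through the rational $T_i$ (each unknotted since $V(T_i)$ is planar by Lemma~\ref{rat0lem}), hence trivial by Schubert's Theorem, while non-planarity of $V(T)$ follows from Lemma~\ref{rat0lem} because $T$ is non-rational. Your extra care with the disjoint decomposing spheres $\partial B_i$, the crossing-free arborescent skeleton giving the unknotted summand $K_0$, and the verification that the threading hypothesis rules out closed components simply makes explicit what the paper compresses into the word ``analogous.''
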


\begin{proof}  Observe that by our hypotheses, $V(T)$ must be a wedge of two circles.  Thus the argument is analogous to the last case in the proof of Theorem~\ref{theorem1}.
\end{proof}

The algebraic tangle $T$ in Figure~\ref{arbravel} illustrates that the converse of Corollary \ref{arborescentclosure} does not hold. 
\begin{figure}[http]
\begin{center}
\includegraphics[width=4cm]{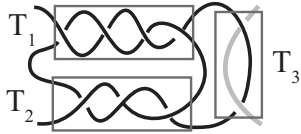}
\caption{A counterexample to the converse of Corollary \ref{arborescentclosure}.}
\label{arbravel}
\end{center}
\end{figure}
 In particular, the grey strand does not pass through $T_1$ or $T_2$.  Observe that the vertex closure $V(T)$ contains no non-trivial knots or links.  However, since $T$ is non-rational, it follows from Lemma \ref{rat0lem} that $V(T)$ is non-planar.  Thus $V(T)$ is a ravel.

\bigskip

\section{Vertex closure with crossing replacement}\label{exception}

We are now interested in whether we can obtain a ravel from a projection of a Montesinos tangle by replacing some number of crossings by vertices and taking the vertex closure. In this case, we need to specify what types of projections we are considering.

\begin{defn}
A projection of a rational tangle $T$ is said to be in \emph{alternating 3-braid form} if it is alternating and has the form of Figure~\ref{3braid}, where each box $A^i$ consists of some number of horizontal twists, and this number is non-zero for all $i> 1$. 
\end{defn}

\begin{figure}[http]
\begin{center}
\includegraphics[width=8cm]{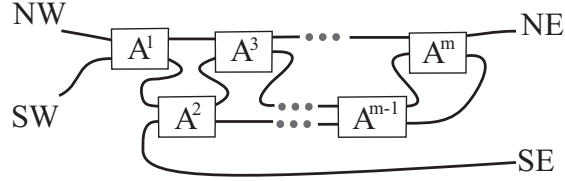}
\caption{The 3-braid form of a rational tangle.}
\label{3braid}
\end{center}
\end{figure}

It follows from Schubert \cite{schubert2} and more recently Kauffman and Lambropoulou \cite{kauff04} that every rational tangle has a projection in alternating 3-braid form.

\begin{defn}
A projection of a Montesinos tangle $T$ is said to be in \emph{standard form} if it is expressed as $T=T_1+ \dots +T_n$, where each $T_i$ is a non-trivial rational tangle in alternating 3-braid form and $n$ is minimal. 
\end{defn}

 Note that every Montesinos tangle with no trivial vertical tangle as a summand has a projection in standard form, though this projection may not be alternating.

\begin{defn}  Let $T$ be a projection of a knot, link, or tangle.  The embedded graph obtained from $T$ by replacing some number of crossings by vertices of valence 4 is denoted by $T'$ and referred to as an \emph{insertion of vertices} into $T$.
\end{defn}

Note that if $T$ is a tangle then $T'$ is not technically an embedded graph, because its endpoints are not vertices.   However, for convenience we will abuse notation and refer to $T'$ as an embedded graph.  Now let $T=T_1+ \dots + T_n$ be a projection of a Montesinos tangle in standard form. Then $T_i'$ denotes the subgraph of $T'$ obtained from $T_i$ by vertex insertion, and $(A_i^j)'$ denotes the subgraph obtained from the $j$th box of twists $A_i^j$ in $T_i$ by vertex insertion.  If there are no vertices in $T_i'$ or in $(A_i^j)'$, then we write $T_i'=T_i$ or $(A_i^j)'=A_i^j$, respectively.  

The following result shows that a ravel cannot occur in the special case where a single crossing is replaced by a vertex and $V(T')$ is a $\theta_4$ graph (i.e., the graph consists of two vertices and four edges between them).

\begin{thm} \label{onecase}{\bf(Farkas, Flapan, Sullivan \cite{farkas11})}  Let $T=T_1+ \dots +T_n$ be a projection of a Montesinos tangle in standard form with $n>1$, and let $T'$ be obtained from $T$ by replacing a single crossing by a vertex such that the vertex closure $V(T')$ is a $\theta_4$ graph.  Then $V(T')$ contains a non-trivial knot and hence is not a ravel. \end{thm}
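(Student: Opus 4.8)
The plan is to exploit the fact that a $\theta_4$ graph consists of two valence-four vertices joined by four edges, so that every cycle in it uses exactly two of the edges and is therefore a single simple closed curve, i.e. a knot. Consequently it suffices to produce one cycle of $V(T')$ that is a non-trivial knot. First I would pin down the local picture at the inserted vertex. The closing vertex $w$ and the vertex $v$ coming from the replaced crossing are the only two vertices of $V(T')$, and both have valence four. If the replaced crossing were a self-crossing of a single strand of $T$, then splitting that strand at $v$ would produce an arc running from $v$ back to $v$, that is, a loop edge at $v$; and a closed component of $T$ would contribute a connected component of $V(T')$ disjoint from $w$. Neither can occur in a $\theta_4$ graph, so the hypothesis forces $T$ to be a genuine $2$-string tangle and the replaced crossing to be a crossing between its two distinct strands, say $\alpha$ and $\beta$. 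Thus the four edges of $V(T')$ are exactly the four half-strands running from $v$ out to the four boundary points of $T$ and on to $w$, two coming from $\alpha$ and two from $\beta$.

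Next I would locate the inserted vertex in a summand $T_i$ and use $n>1$ to find a non-trivial rational summand lying entirely to one side of $v$: take $T_n$ if $i<n$ and $T_1$ if $i=n$ (the two cases are mirror images). Consider the cycle $C$ of the $\theta_4$ graph formed by the two half-strands leaving $v$ on that side, closed up through $w$; by construction $C$ is the denominator closure of the sub-tangle lying on that side of $v$. Cutting along the $2$-spheres separating consecutive summands---each of which meets $C$ in exactly two points---decomposes $C$ as a connected sum of the denominator closures of the individual rational tangles on that side, and in particular exhibits $D(T_n)$ (resp. $D(T_1)$) as a connected summand. Because the projection is in standard form and $n$ is minimal, $T_n$ (resp. $T_1$) is a non-trivial rational tangle that is not horizontal, exactly as in the proof of Theorem~\ref{theorem1}, so Wolcott's Theorem gives that its denominator closure is non-trivial, and Schubert's Theorem then shows $C$ is non-trivial. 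Since $C$ is a single cycle it is a non-trivial knot contained in $V(T')$, which is therefore not a ravel.

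The main obstacle is the parity bookkeeping needed to make the previous paragraph precise. The identification of $C$ with a denominator closure, and in particular the guarantee that the isolated non-trivial summand contributes a knot rather than a two-component link (so that the single circle $C$ really is knotted), depends on how each half-strand threads the intervening summands, which is governed by the $\infty$- or $0$-parity of each $T_j$ lying between $v$ and the chosen end. When an intervening summand has $\infty$-parity the naive denominator cycle can fail to isolate the extreme summand, or can try to close into a link. To handle this I would, according to the parities encountered, replace $C$ by the appropriate corner-pairing cycle---a numerator-type closure $N(\cdot)$ or one of the two diagonal cycles of the $\theta_4$ graph---and if necessary re-choose which extreme summand to isolate, then verify in each case that one of the six cycles of the $\theta_4$ graph is a connected sum possessing a non-trivial knot summand. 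Carrying out this finite case analysis on the parities of $T_1,\dots,T_n$ and confirming that at least one cycle is always a non-trivial knot is the technical heart of the argument.
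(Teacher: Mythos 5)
First, a point of comparison: the paper does not actually prove this statement --- it is quoted from Farkas--Flapan--Sullivan \cite{farkas11} --- so your proposal can only be judged on its own merits and against the techniques the paper uses for its related results (Theorem~\ref{theorem1}, Lemma~\ref{nofinal}, Proposition~\ref{forward}). Your reduction is sound: in a $\theta_4$ graph every cycle is a pair of edges and hence a knot, your analysis of why the replaced crossing must involve both strands (and why $T$ can have no closed components) is correct, and the tools you reach for --- standard form forces the extreme summands to be non-horizontal and non-trivial, then Wolcott's Theorem plus Schubert's Theorem --- are exactly the ones the paper uses elsewhere.

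The gap is your second paragraph's central assertion that the cycle $C$ formed by ``the two half-strands leaving $v$ on that side, closed up through $w$'' is ``by construction the denominator closure of the sub-tangle lying on that side of $v$.'' This is not true by construction, and in general it is false: in alternating $3$-braid form strands turn back, so an edge that leaves $v$ heading rightward can exit the tangle ball at a \emph{western} corner. In that case $C$ meets the ball $B_n$ containing $T_n$ in only one strand, the sphere $\partial B_n$ exhibits only the closure of a single strand of $T_n$ (which is unknotted, $T_n$ being rational) as a summand of $C$, and nothing forces $C$ to be knotted. You acknowledge this in your third paragraph and defer it to ``a finite case analysis on the parities,'' but that case analysis \emph{is} the theorem; as written you have a plan, not a proof. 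For the record, the two missing ingredients can be pinned down. (i) The $\theta_4$ hypothesis itself forces every $T_k$ with $k\neq i$ (where $v\in T_i'$) to have non-$\infty$-parity: otherwise the NE-SE strand (for $k>i$), or NW-SW strand (for $k<i$), of such a $T_k$, extended away from $v$'s side, could never re-enter $B_k$ (each marked point of $\partial B_k$ is crossed exactly once) and would close up into a loop edge at $w$, contradicting that $V(T')$ is $\theta_4$. (ii) The correct cycle is not ``the two edges leaving $v$ rightward'' but the union of the two edges ending at the two \emph{eastern corners} of $T$ (when $i<n$; western corners when $i=n$): since the four edges correspond bijectively to the four corners and no edge can re-enter $B_n$, that cycle meets $B_n$ in precisely the two strands of $T_n$ and decomposes along $\partial B_n$ as $D(T_n)\,\#\,K$ for some knot $K$; by (i) $D(T_n)$ is a knot, by standard form and minimality $T_n$ is neither horizontal nor trivial, and Wolcott and Schubert finish. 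Your proposal names neither (i) nor (ii), and the specific cycle you chose is the one that can fail.
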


To see the necessity of the hypothesis that $V(T')$ is a $\theta_4$ graph consider the Montesinos tangle in standard form illustrated on the left in Figure~\ref{example}.  By replacing a crossing in $T_2$ with a vertex and taking the vertex closure as illustrated on the right, we obtain a ravel which is not a $\theta_4$ graph.

\begin{figure}[http]
\begin{center}
\includegraphics[width=10cm]{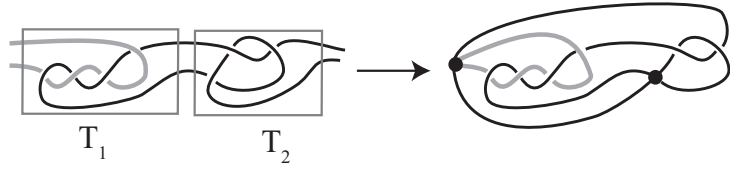}
\caption{A Montesinos tangle which becomes a ravel by inserting one vertex and taking the vertex closure.}
\label{example}
\end{center}
\end{figure}

We will now consider the case where we replace any number of crossings of a Montesinos tangle in standard form by vertices.  We begin with some technical definitions.

\begin{defn}  Let $T$ be a projection of a rational tangle in alternating 3-braid form with boxes $A^1$, \dots, $A^m$ as illustrated in Figure~\ref{3braid}.  A vertex or crossing $x$ of $T'$ is said to be to the \emph{right of} a vertex or crossing $y$ if either $x$ and $y$ are in the same box $(A^i)'$ and $y$ is to the right of $x$ in $(A^i)'$, or $x$ is in the box $(A^i)'$ and $y$ is in the box $(A^j)'$ and $i>j$.
\end{defn}

Observe that given a rational tangle $T$ in 3-braid form, a subtangle $R$ of $T$ containing consecutive boxes $A^j, \dots, A^m$ (illustrated in Figure~\ref{subtangle}) is itself a rational tangle in 3-braid form.

 \begin{figure}[http]
\begin{center}
\includegraphics[width=8cm]{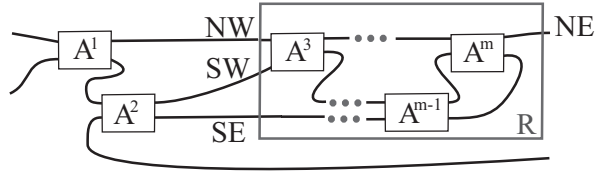}
\caption{The subtangle $R$ is itself a rational tangle in 3-braid form.}
\label{subtangle}
\end{center}
\end{figure}

\begin{defn}\label{exceptional}
Let $T=T_1+ \dots +T_n$ be a projection of a Montesinos tangle in standard form, and let $T'$ be obtained by replacing some nonzero number of crossings by vertices. 
Then $T'$ is said to be an \emph{exceptional vertex insertion} if all of the following conditions hold.  

\begin{enumerate}
\item  There exists precisely one $T_j$ with $\infty$-parity, and $T_j'$ has no vertices. 

\smallskip

\item  For all $k \neq j $, $T_k'$ contains exactly one vertex $v_k$, and $v_k$ is in $(A_k^2)'$ or possibly in $(A_k^3)'$ if $A_k^2$ has a single crossing.

\smallskip

\item For all $k\not =j$, the subtangle $R_k\subseteq T_k$ containing the boxes of $T_k$ to the right of the vertex $v_k$ has at least two crossings.

\smallskip

\item For all $k\not =j$, $T_k'$ has a loop containing $v_k$.

\end{enumerate}
\end{defn}
\medskip

We see as follows that the insertion of vertices illustrated for the tangle in Figure~\ref{example} is exceptional.

\begin{enumerate}
\item $T_1$ is the only $T_i$ with $\infty$-parity, and $T_1'$ has no vertices.
 \smallskip

\item $T_2'$ contains exactly one vertex, and it is in the second box of $T_2'$. (Note that the first box of $T_2'$ has zero crossings). 

\smallskip

\item The subtangle $R_2\subseteq T_2$ has two crossings.

\smallskip

\item  $T_2'$ has a loop containing its vertex.

\end{enumerate}

\medskip

Figure~\ref{exceptionaldiag} illustrates a generalization of the exceptional vertex insertion in Figure~\ref{example}.  Here $T_3$ is any tangle with $\infty$-parity; for each $k\not =3$, $T_k'$ contains exactly one vertex and it replaces the only crossing in $A^2_k$; and $R_k$ is any rational tangle containing at least two crossings such that $T_k'$ has a loop containing $v_k$.  As in Figure~\ref{example}, we obtain a ravel by taking the vertex closure of this exceptional vertex insertion.

 \begin{figure}[http]
\begin{center}
\includegraphics[width=10cm]{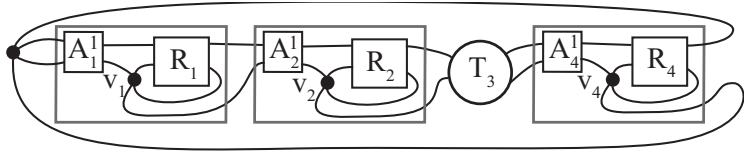}
\caption{A ravel obtained by an exceptional vertex insertion together with vertex closure.}
\label{exceptionaldiag}
\end{center}
\end{figure}

The remainder of the paper is devoted to proving the following theorem.  

\begin{thm}\label{monthm} 
Let $T=T_1+\dots +T_n$ be a projection of a Montesinos tangle in standard form, and let $T'$ be obtained from $T$ by replacing at least one crossing by a vertex. Then the vertex closure $V(T')$ is a ravel if and only if $T'$ is an exceptional vertex insertion. 
\end{thm}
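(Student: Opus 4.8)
The plan is to prove the two implications separately. The defining conditions of an exceptional vertex insertion (Definition~\ref{exceptional}) are rigid enough that the sufficiency direction---exceptional insertion implies ravel---reduces to a direct verification, whereas the necessity direction---ravel implies exceptional---requires a systematic analysis showing that violating any one of the four conditions produces either a planar graph or an embedded non-trivial knot or link. Throughout, the workhorses are Thistlethwaite's Theorem, which certifies that a reduced alternating diagram with at least one crossing is a non-trivial knot or link; Wolcott's Theorem together with Schubert's Theorem, which identify the loops of a vertex closure as connected sums of denominator closures; and Lemma~\ref{rat0lem}, which equates planarity of a two-string tangle closure with rationality of the tangle.

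For sufficiency, let $T_j$ be the unique summand with $\infty$-parity. I would first enumerate the simple cycles of $V(T')$: by condition~(4) each vertex $v_k$ with $k\neq j$ carries a loop, and since a simple closed curve meets a valence-four vertex in exactly two edge-ends, these $n-1$ loops are isolated as cycles, while every other cycle passes "straight through" each $v_k$ via its two remaining edges. Because $T_j'$ has no vertex and each such passage meets every rational summand in a single unknotted arc, Lemma~\ref{rat0lem} makes each passage trivial, and Schubert's Theorem then shows every cycle of $V(T')$ is a connected sum of unknots, hence unknotted; the same splitting argument applied to disjoint pairs of cycles rules out non-trivial links, just as in the final case of the proof of Theorem~\ref{theorem1}. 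For non-planarity I would delete the $n-1$ inserted loops to obtain a subgraph $H$ of $V(T')$; after suppressing the resulting bivalent vertices, $H$ is the vertex closure of a Montesinos tangle that still contains the untouched $\infty$-parity summand $T_j$ and is therefore non-rational, so $H$ is non-planar by Lemma~\ref{rat0lem}. Since a subgraph of a planar spatial graph is planar, the non-planarity of $H$ forces the non-planarity of $V(T')$.

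For necessity, I assume $V(T')$ is a ravel and recover conditions~(1)--(4) in order. Condition~(1) comes from the parity analysis of Theorem~\ref{theorem1}: if two or more summands had $\infty$-parity, extending the NE--SE and NW--SW strands of the rightmost such summand to the closing vertex would produce a non-trivial link, and if the $\infty$-parity summand itself contained a vertex the surviving denominator closure would be a non-trivial knot; invoking Theorem~\ref{onecase} to dispatch the $\theta_4$ case, this pins down a single vertex-free $\infty$-parity summand and yields Corollary~\ref{arborescentclosure}. Conditions~(2)--(4) I would establish by contrapositive: if a vertex $v_k$ were placed to the left of $(A_k^2)'$, or the right-hand subtangle $R_k$ carried fewer than two crossings, or $T_k'$ had no loop through $v_k$, then I would exhibit a concrete cycle of $V(T')$ whose diagram is reduced and alternating with at least one crossing, so that Thistlethwaite's Theorem forces a non-trivial knot, contradicting the ravel hypothesis.

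The main obstacle is this necessity direction, and in particular the fine placement conditions~(2) and~(3): one must understand precisely how inserting a vertex into the alternating 3-braid form reroutes the strands of $V(T')$, and verify that the cycles extracted from the rerouted diagram remain reduced and alternating so that Thistlethwaite's Theorem applies. Managing those diagrams---keeping them reduced after rerouting through the inserted vertex, and separately treating the boundary case in which $A_k^2$ consists of a single crossing so that the vertex may legitimately sit in $(A_k^3)'$---is where the bulk of the careful case-by-case work will be required.
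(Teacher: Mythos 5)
Your necessity outline is broadly in the spirit of the paper's argument (Proposition~\ref{forward}), but your sufficiency direction contains a fatal gap, and it is exactly at the point where the paper has to work hardest: proving that $V(T')$ is non-planar. You propose to delete the $n-1$ inserted loops, suppress the resulting bivalent vertices, and claim the remaining subgraph $H$ is the vertex closure of a non-rational Montesinos tangle ``that still contains $T_j$.'' This is false. The loop at $v_k$ is the closure of one strand of the rational tangle $R_k$, so once it is deleted, what survives of $R_k$ is a single unknotted strand; consequently the remnant of each $T_k'$ with $k\neq j$ is a \emph{trivial horizontal} tangle (this is precisely what the paper records in Figure~\ref{Deleted}). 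Since the trivial horizontal tangle is the identity for tangle sum, your $H$ is the vertex closure of a tangle isotopic to $T_j$ alone, which is rational; by Lemma~\ref{rat0lem}, $H$ is therefore \emph{planar}, and your appeal to ``a non-planar subgraph forces non-planarity'' has nothing to apply to. The failure is structural, not cosmetic: a ravel of this kind is close to almost unknotted, so one should not expect any proper subgraph with a single $4$-valent vertex to certify non-planarity, because Lemma~\ref{rat0lem} applies only to wedges of two circles, and every such subgraph here closes up a rational tangle.

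The paper circumvents this by deleting all loops \emph{except} $c_1$, retaining a subgraph $G$ with two $4$-valent vertices ($w$ and $v_1$), for which Lemma~\ref{rat0lem} is unavailable. Non-planarity of $G$ is then detected by Sawollek's Theorem: the set of associated links $S(G)$, obtained by substituting rational tangles at the two vertices, is an isotopy invariant; every non-split link associated to the planar embedding $G_0$ is $2$-bridge or a connected sum of $2$-bridge links, whereas a suitable substitution in $G$ yields the numerator closure of a Montesinos tangle $N(P+U+T_j)$ whose double branched cover is an irreducible Seifert fibered space with three exceptional fibers and infinite fundamental group, hence a prime link that is not $2$-bridge. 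None of this machinery (or any substitute for it) appears in your proposal, so the backward implication of Theorem~\ref{monthm} is not established. Your argument that $V(T')$ contains no non-trivial knot or link, and your plan for the forward direction, are consistent with the paper's approach, though the forward direction as written is a program (it does not, for instance, address why each $T_k'$ has \emph{exactly one} vertex, the paper's Claim~3) rather than a proof.
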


Observe that requirement (4) of an exceptional vertex insertion implies that if $T'$ is an exceptional vertex instertion, then $V(T')$ cannot be a $\theta_4$ graph.  Thus Theorem \ref{monthm} is a generalization of Theorem~\ref{onecase}.  Theorem \ref{monthm} immediately implies the following more simply stated corollary.

\begin{cor}\label{noinfty}
Let $T=T_1+\dots +T_n$ be a projection of a Montesinos tangle in standard form, and let $T'$ be obtained from $T$ by replacing at least one crossing by a vertex. If $V(T')$ is a ravel, then precisely one $T_i$ has $\infty$-parity. 
\end{cor}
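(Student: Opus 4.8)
The plan is to obtain Corollary~\ref{noinfty} as an immediate consequence of Theorem~\ref{monthm}, which does all of the substantive work. Since Theorem~\ref{monthm} asserts that $V(T')$ is a ravel precisely when $T'$ is an exceptional vertex insertion, and condition~(1) of Definition~\ref{exceptional} stipulates that an exceptional vertex insertion has exactly one $T_j$ with $\infty$-parity, the corollary is simply the restriction of that characterization to this one piece of combinatorial data.

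Concretely, I would argue as follows. Assume $V(T')$ is a ravel. By the forward implication of Theorem~\ref{monthm}, $T'$ is then an exceptional vertex insertion. Invoking condition~(1) of Definition~\ref{exceptional} directly yields that there is precisely one $T_i$ with $\infty$-parity, which is exactly the conclusion of the corollary. No further case analysis or topological computation is required once Theorem~\ref{monthm} is in hand, and in particular I would not need to analyze the geometry of the remaining conditions (2)--(4).

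Accordingly, there is essentially no obstacle internal to the corollary itself; the entire difficulty has been pushed into Theorem~\ref{monthm}. For completeness I note that a direct proof avoiding the full strength of Theorem~\ref{monthm} would be considerably harder: one would have to separately rule out the case where no $T_i$ has $\infty$-parity and the case where two or more $T_i$ do, in each instance exhibiting a non-trivial knot or link inside $V(T')$ by a loop-tracing argument in the spirit of the proof of Theorem~\ref{theorem1} (and accounting for the effect of the inserted vertices on the relevant closures). Since those exclusions are precisely the content of the harder direction of Theorem~\ref{monthm}, the efficient route is simply to cite the theorem and read off condition~(1).
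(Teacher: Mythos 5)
Your proposal is correct and matches the paper's own treatment exactly: the paper states that Theorem~\ref{monthm} ``immediately implies'' Corollary~\ref{noinfty}, which is precisely your argument of applying the forward direction of the theorem and reading off condition~(1) of Definition~\ref{exceptional}. Nothing further is needed.
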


Observe that if $T=T_1 + \dots +T_n$ is a projection of a non-rational Montesinos tangle in standard form, then no $T_i$ is horizontal since otherwise $n$ would not be minimal.  Also, by the definition of standard form, no $T_i$ is a trivial  tangle.  Finally, because every vertex in $V(T')$ has valence 4, no arc in $T'$ is forced to terminate at a vertex.  This means that any arc in a $T_i'$ can be extended to go from one of the points NE, SE, NW, SW of $T_i'$ to another.  We will make use of these facts together with the following simplifying assumptions that allow us to remove unnecessary crossings in any $T_i'$.
\newpage

\noindent{\bf Simplifying Assumptions}

\begin{enumerate} 
\item If there are vertices in some box $(A_i^j)'$ of $T_i'$, then we can untwist about them to remove all of the crossings of $(A_i^j)'$. Thus we assume that there are no crossings in any box containing a vertex. 

\medskip

\item  If there is a single crossing to the right of the rightmost vertex of some $T_i'$, then the crossing can be removed by untwisting about the vertex as illustrated in Figure~\ref{1crossing}.  Thus we assume that there are either zero crossings or at least two crossings to the right of the rightmost vertex in any $T_i'$.

\end{enumerate}

\begin{figure}[http]
\begin{center}
\includegraphics[width=5cm]{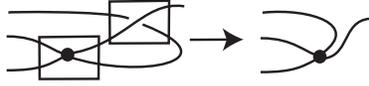}
\caption{A single crossing to the right of the rightmost vertex of $T_i'$ can be removed.}
\label{1crossing}
\end{center}
\end{figure}
\bigskip

The rest of the paper is organized as follows.  In Section~5, we prove two lemmas that we will use to prove the forward direction of Theorem~\ref{monthm}.  We then prove the forward direction in Section~6, and prove the backward direction in Section~7.

\bigskip


\section{Lemmas for the Forward Direction}\label{Lemmas}

\begin{lem}\label{pathfinding}  Let $T$ be a projection of a non-trivial rational tangle in $3$-braid form.  Suppose that $T'$ has at least one vertex and there are no crossings to the right of its rightmost vertex $v^R$. Then for any pair of distinct points $p_1$ and $p_2$ in $T'$, there is a simple path between $p_1$ and $p_2$ in $T'$. 
\end{lem}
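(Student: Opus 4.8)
The plan is to reduce the statement to the connectivity of $T'$ and then to locate a single vertex that fuses the two strands of the rational tangle. First I would record the elementary topological fact that in a finite connected graph (that is, a connected $1$-complex) any two distinct points are joined by a simple path: after subdividing the edges carrying $p_1$ and $p_2$ so that both become vertices, one takes any walk between them (which exists by connectivity) and deletes the loops between repeated vertices to extract an embedded path. Thus it suffices to prove that $T'$ is connected.

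To prove connectivity I would use that a rational tangle is a $2$-string tangle with no closed components, so the underlying $T$ is a disjoint union of two embedded arcs $\alpha$ and $\beta$. Replacing a crossing by a $4$-valent vertex only identifies points of $T$, and identification can never disconnect a space; hence $T'$ has at most two components, and $T'$ is connected precisely when at least one inserted vertex sits at a crossing of $\alpha$ with $\beta$. The whole problem therefore reduces to exhibiting one such vertex, and $v^R$ is the natural candidate.

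Next I would exploit the hypothesis that no crossing lies to the right of $v^R$. Since each box $A^i$ with $i>1$ contains at least one crossing, and since no box lying to the right of $v^R$ can contain a vertex (as $v^R$ is rightmost), Simplifying Assumption (1) leaves all of those crossings in place; consequently $v^R$ must lie in the final box $A^m$, and it is the rightmost crossing or vertex of the entire tangle. The two edges leaving $v^R$ on its right then meet no further crossing, so they run monotonically out to the two right-hand endpoints $NE$ and $SE$. Tracing the last horizontal-twist box shows that the two arcs passing through $v^R$ terminate at these two distinct endpoints, and I would then argue that these endpoints lie on different strands of $T$, so that $v^R$ is genuinely an $\alpha$–$\beta$ crossing and $T'$ is connected.

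The hard part will be exactly this final step: confirming that the two arcs meeting at $v^R$ lie on the two different strands, rather than being two passes of a single self-twisting strand. Everything before it is soft, whereas here one must use the structure of the final twist box together with the way $T$ distributes its right-hand endpoints among its two strands. This local-to-global strand bookkeeping at the rightmost box is the real content of the lemma; once it is established, the reductions of the first two paragraphs deliver the simple path between $p_1$ and $p_2$.
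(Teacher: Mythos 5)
Your skeleton is the same as the paper's: reduce to connectivity, argue that $v^R$ must sit in the rightmost box $(A^m)'$ (your reasoning here is essentially the paper's, since every box $A^i$ with $i>1$ carries a crossing unless it holds a vertex), and then join $p_1$ and $p_2$ by running each to a common vertex and erasing loops, exactly as the paper does with its paths $P_1$ and $P_2$. But the step you explicitly defer --- showing that the crossing replaced by $v^R$ involved \emph{both} strands of $T$ --- is the entire content of the lemma, and the paper does prove it. A proposal whose ``hard part'' is announced rather than argued has a genuine gap.

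Moreover, the route you sketch for closing that gap would fail. You plan to show that the two edges leaving $v^R$ on its right run out to the NE and SE points and that ``these endpoints lie on different strands of $T$.'' The latter assertion says precisely that $T$ does not have $\infty$-parity. Yet Lemma~\ref{pathfinding} must hold for $\infty$-parity tangles in alternating $3$-braid form: such summands $T_j$ are exactly what an exceptional vertex insertion requires, and the paper applies Lemma~\ref{pathfinding} to such a $T_j'$ in Claim 1 of the proof of Proposition~\ref{forward}. For an $\infty$-parity tangle the NE and SE points lie on the \emph{same} strand, so the statement you intend to prove is false in a case the lemma must cover; correspondingly, in the $3$-braid form the two right-hand exits of $A^m$ do not both run to NE and SE --- if they did, the crossings of a two-crossing $\infty$-parity clasp in this form would be self-crossings of a single strand and the lemma itself would fail. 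The paper's argument is parity-free: reading Figure~\ref{3braid}, a path that exits the rightmost box $A^m$ can never return to it, so each strand of $T$ traverses $A^m$ at most once; since both strands must pass through $A^m$, the two through-arcs of that twist box belong to different strands, and hence \emph{every} crossing of $A^m$ --- in particular the one that became $v^R$ --- joins the two strands. That ``no return to $A^m$'' observation is the missing idea, and endpoint bookkeeping at NE and SE cannot substitute for it.
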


\begin{proof} Observe from Figure~\ref{3braid} that if a path $P$ starts at the NE point of $T$ and goes leftward, it will go through the rightmost box $A^m$ precisely once.  In particular, once a path exits from $A^m$, it cannot return to $A^m$.  Thus both strands of $T$ must go through $A^m$.

Recall that for all $i\not =1$, the box $A^i$ contains a non-zero number of crossings.  Since there are no crossings to the right of $v^R$, this means that $v^R$ occurs in the rightmost box $(A^m)'$ of $T'$ as illustrated in Figure~\ref{path}.   Thus both strands of $T$ are involved in the crossing that becomes $v^R$.  

\begin{figure}[http]
\begin{center}
\includegraphics[width=8cm]{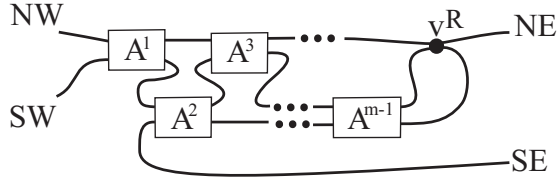}
\caption{Both strands of $T$ are part of the crossing that becomes $v^R$.}
\label{path}
\end{center}
\end{figure}

 Now consider distinct points $p_1$ and $p_2$ in $T'$.  Since both strands of $T$ are part of the crossing that becomes $v^R$, there are paths $P_1$ and $P_2$ in $T'$ joining both $p_1$ and $p_2$ to $v^R$.  Thus $P=P_1\cup P_2$ is a path in $T'$ between $p_1$ and $p_2$.  By removing any loops in $P$ we obtain a simple path joining $p_1$ and $p_2$.  \end{proof}

It follows from Lemma~\ref{pathfinding} that if there are no crossings to the right of the rightmost vertex of $T'$, then there is a simple path in $T'$ between any pair of the NW, SW, NE, SE points of $T'$.  We use Lemma~\ref{pathfinding} to prove our next lemma.

\begin{lem}\label{nofinal} Let $T=T_1 + \dots + T_n$ be a projection of a Montesinos tangle written in standard form, and let $T'$ be obtained from $T$ by replacing at least one crossing by a vertex.  Suppose that every $T_i'$ containing a vertex has at most one crossing to the right of its rightmost vertex $v_i^R$. Then $V(T')$ is not a ravel. 
\end{lem}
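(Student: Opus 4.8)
The plan is to prove that $V(T')$ is not a ravel by producing a non-trivial knot or link inside it. I would first apply Simplifying Assumption~(2): since each $T_i'$ that contains a vertex has at most one crossing to the right of its rightmost vertex $v_i^R$, I may untwist to delete that crossing and thereby assume there are no crossings to the right of any $v_i^R$. Exactly as in the proof of Lemma~\ref{pathfinding}, this forces each such $v_i^R$ into the rightmost box of $T_i'$, with both strands of $T_i$ running through it and a trivial region to its right; Lemma~\ref{pathfinding} then guarantees a simple path between any two of the four boundary points of each vertex-bearing $T_i'$. Since $T$ is in standard form, each $T_i$ is a non-trivial rational tangle that is neither horizontal nor a trivial vertical tangle, so by Wolcott's Theorem $D(T_i)$ is a non-trivial knot or link for every $i$; this supplies the non-triviality to be captured.

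The key geometric point is that the closing vertex $w$ already identifies the four ends of $T$, so that the NE and SE points of $T_n$ are joined to each other through $w$, as are the NW and SW points of $T_1$. Thus one of the two ``denominator caps'' of the outer tangle $T_1$ (namely its NW--SW cap) or of $T_n$ (namely its NE--SE cap) is available for free. I would then build an embedded loop $L$ by completing the remaining cap using a turn at the rightmost vertex of an available vertex-bearing tangle: since at least one vertex was inserted, I can choose to work with $T_1$ when a vertex lies to its right and with $T_n$ when a vertex lies to its left, route the loop rightward (resp.\ leftward) through the intervening vertex-free rational tangles and their trivial right regions, and turn it around at that vertex (with Lemma~\ref{pathfinding} providing the interior routing). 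The effect is that $L$ realizes $D(T_1)$ (or $D(T_n)$) connect-summed with the closures contributed by the vertex-free tangles the loop passes through; when the chosen vertex sits in the immediately adjacent tangle, $L$ is exactly $D(T_1)$ or $D(T_n)$.

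By Schubert's Theorem a connected sum one of whose summands is the non-trivial $D(T_1)$ or $D(T_n)$ is itself non-trivial, so $L$ is a non-trivial knot or link in $V(T')$ and hence $V(T')$ is not a ravel. I expect the main obstacle to be the verification of this construction rather than any single idea in it: one must run a case analysis on the $\infty$- and $0$-parities of the relevant $T_i$, since these determine how the two strands connect across the sum, whether the closed-up curve $L$ is a single knot or a two-component link (the $\infty$-parity case of $D(T_n)$ producing a link), and which of the two caps is genuinely free through $w$. Alongside the parity cases one must check embeddedness --- that $L$ meets $w$ and each inserted $4$-valent vertex at most once --- and dispose of the degenerate situations (for instance when the only inserted vertex lies in the very tangle whose closure one wishes to realize). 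These bookkeeping cases are where the actual work lies.
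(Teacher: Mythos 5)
Your plan has a fatal structural flaw: you set out to prove the lemma by unconditionally exhibiting a non-trivial knot or link in $V(T')$, but the conclusion ``$V(T')$ is not a ravel'' can hold for two very different reasons --- either $V(T')$ contains a non-trivial knot or link, or $V(T')$ is planar --- and under the hypotheses of this lemma the second alternative genuinely occurs. For instance, replace \emph{every} crossing of $T$ by a vertex: the hypothesis is satisfied vacuously (no $T_i'$ has any crossing at all to the right of $v_i^R$), the projection of $V(T')$ is crossing-free, and $V(T')$ contains no non-trivial knots or links whatsoever, so there is nothing for your construction to find. The same problem occurs when $n=1$, or more generally whenever every $T_i'$ contains a vertex. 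Your construction implicitly requires a vertex-free end tangle ($T_1$ or $T_n$) whose denominator closure can be realized as an embedded loop, together with a vertex-bearing tangle elsewhere to ``turn around'' at; once a tangle contains a vertex its strands are no longer strands, and a loop through it need not pick up $D(T_i)$ at all. You flag exactly these situations (``when the only inserted vertex lies in the very tangle whose closure one wishes to realize'') as degenerate bookkeeping, but they are not bookkeeping: they are precisely the cases in which no non-trivial knot or link exists and an entirely different argument --- a planarity argument --- is required.

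This is reflected in how the paper actually proceeds. Its proof is conditional: \emph{assume} $V(T')$ contains no non-trivial knots or links, and then show $V(T')$ can be isotoped into the plane. The knot-producing construction you describe (extend a path in a vertex-bearing $T_{i+1}'$ via Lemma~\ref{pathfinding}, close it up through $w$ or through an $\infty$-parity tangle, and recognize the result as a connected sum containing $D(T_i)$, non-trivial by Wolcott's and Schubert's Theorems) does appear in the paper, but only as Claim~1 of its proof, used contrapositively to conclude that every $T_i'$ must contain a vertex. The substance of the lemma lies in what comes after: showing there is at most one crossing between adjacent vertices (via Thistlethwaite's Theorem applied to a reduced alternating piece), removing all crossings to the left of the leftmost vertex by flips and untwisting about the closing vertex $w$, and finally removing the remaining isolated crossings, yielding a planar embedding. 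None of this appears in your plan, so as written the proposal cannot be completed into a proof of the lemma.
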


\begin{proof}  We assume that $V(T')$ does not contain any non-trivial knots or links, and we will prove that $V(T')$ can be isotoped into the plane.

By Simplifying Assumption (2), we can assume that no $T_i'$ has any crossings to the right of its rightmost vertex $v_i^R$.  Thus any $T_i'$ that contains a vertex must have $v_i^R$ in its rightmost box $(A_i^m)'$.  Now we see in Figure~\ref{examplenovert} that we can remove all of the crossings between $v_i^R$ and the next vertex to its left in $T_i'$, or all of the crossings in $T_i'$ if $v_i^R$ is the only vertex in $T_i'$.  Thus we assume there are no such crossings.

\begin{figure}[http]
\begin{center}
\includegraphics[width=11cm]{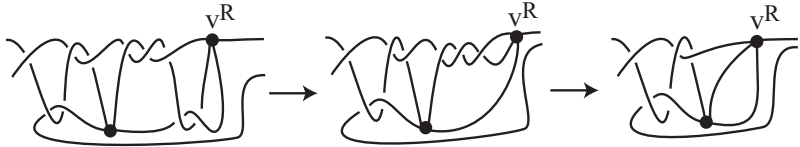}
\caption{We remove all of the crossings between $v_i^R$ and the next vertex to the left in $T_i'$.}
\label{examplenovert}
\end{center}
\end{figure}

We now sequentially prove the following list of claims showing that we can remove all of the crossings of $V(T')$ to obtain a planar embedding.  \medskip

\begin{enumerate}
\item Every $T_i'$ contains a vertex.
\medskip

\item $V(T')$ can be simplified so that there is at most one crossing between any pair of adjacent vertices in each $T_i'$.
\medskip

\item All crossings to the left of the leftmost vertex in each $T_i'$ can be removed.
\medskip

\item  All of the crossings of $V(T')$ can be removed.
\end{enumerate}
\bigskip


\noindent {\bf Claim 1:} Every $T_i'$ contains a vertex.
\medskip

 First we consider the case where $T_1'$ is the only $T_i'$ containing a vertex.  Then there are no crossings to the right of its rightmost vertex, and hence by Lemma~\ref{pathfinding} there is a simple path $L_1$ in $T_1'$ between its NE and SE points.  Now we extend the ends of $L_1$ to the right until either they meet in some $T_i$ with $\infty$-parity or at the closing vertex $w$.  This gives us a simple closed curve $L$.  Since there are no crossings in $T_1$ to the right of its rightmost vertex, $L$ is the connected sum of $D(T_2)$ together with a possibly trivial arc knot to its right.  Now, since $T$ is in standard form, $T_2$ is not horizontal and not a trivial vertical tangle.  Thus by Wolcott's Theorem, $D(T_2)$ is a non-trivial knot or link.  It now follows from Schubert's Theorem that $L$ is a non-trivial knot or link.  As this is contrary to our assumption, this case doesn't occur.

Thus we now assume that for some $i$, $T_{i+1}'$ contains a vertex and $T_i'$ does not.   Let $v_{i+1}^R$ be the rightmost vertex of $T_{i+1}'$.  Since there are no crossings in $T_{i+1}'$ to the right of $v_{i+1}^R$, we can apply Lemma~\ref{pathfinding} to obtain a simple path $L_1$ in $T_{i+1}'$ between its NW and SW points. We will now argue that there is also a simple path between the NW and SW points of $T_i$ whose interior is to the left of $T_i$.

 Suppose no $T_k'$ to the left of $T_i'$ contains a vertex or has $\infty$-parity.  Then there are disjoint simple paths $P_i$ and $Q_i$ going leftwards from the NW and SW points of $T_i$ to the closing vertex $w$.  In this case, $L_2=P_i\cup Q_i$ is a simple path between the NW and SW points of $T_i$ whose interior is to the left of $T_i$.  
 
Thus we assume that either some $T_k'$ to the left of $T_i'$ contains a vertex or some $T_k'$ to the left of $T_i'$ has $\infty$-parity and contains no vertices.  Let $T_k'$ be the closest such subgraph to the left of $T_{i}$.  If $T_k'$ contains a vertex, then there are no crossings to the right of its rightmost vertex, and hence by Lemma~\ref{pathfinding} there is a simple path in $T_k'$ between its NE and SE points.  If $T_k=T_k'$ has $\infty$-parity, then the NE-SE strand is a simple path in $T_k$.  Thus in either case $T_k'$ contains a simple path between its NE and SE points.  By combining this path in $T_k'$ with the strands of all $T_j$ with $k<j<i$ and the arcs between these $T_j$ from $T_k'$ to $T_i'$, we obtain a simple path $L_2$ between the SW and NW points of $T_i$ whose interior is to the left of $T_i$.  Figure~\ref{novert} illustrates the paths $L_1$ and $L_2$ as dotted arcs.

\begin{figure}[http]
\begin{center}
\includegraphics[width=4cm]{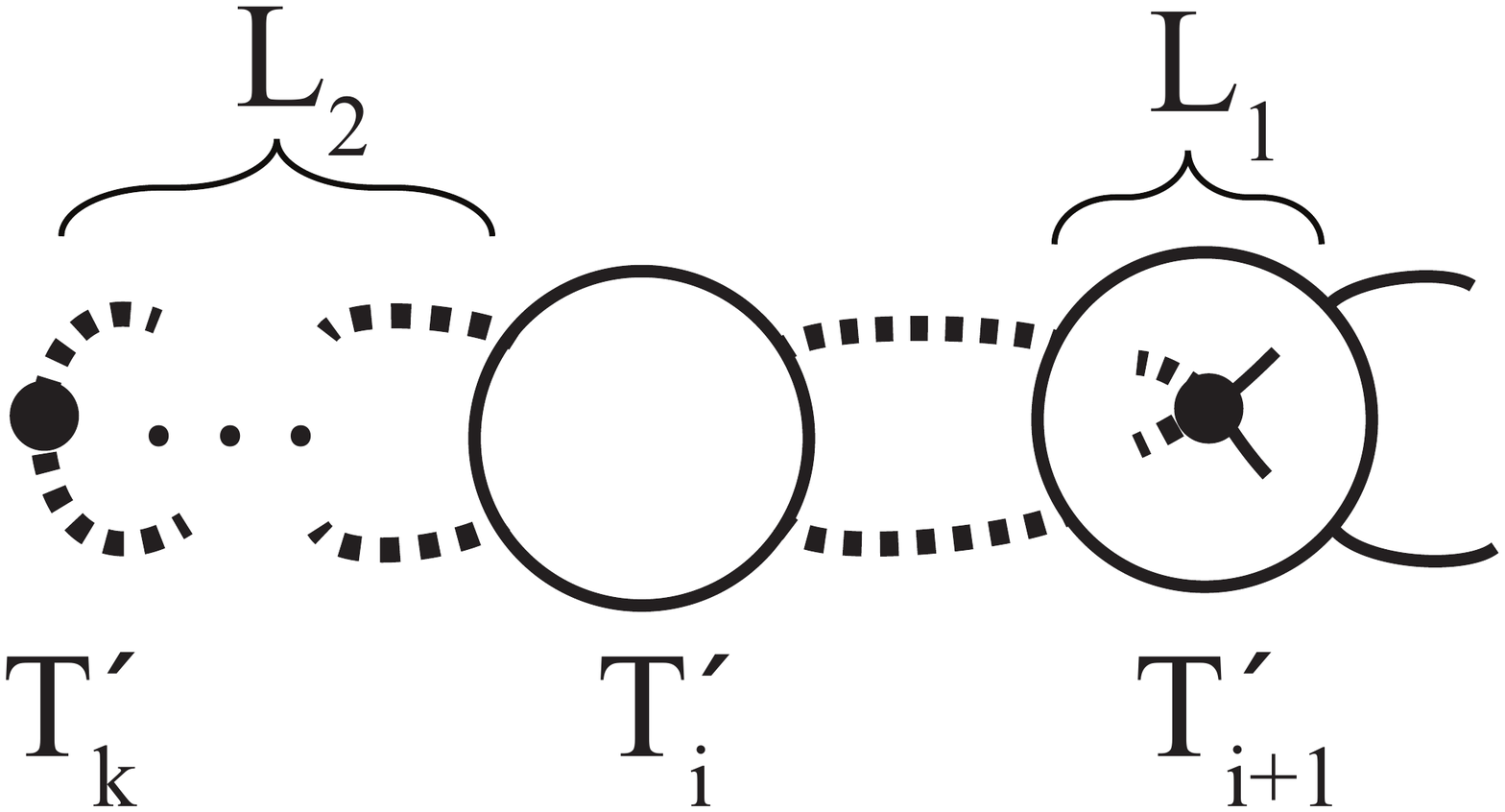}\qquad 
\includegraphics[width=4cm]{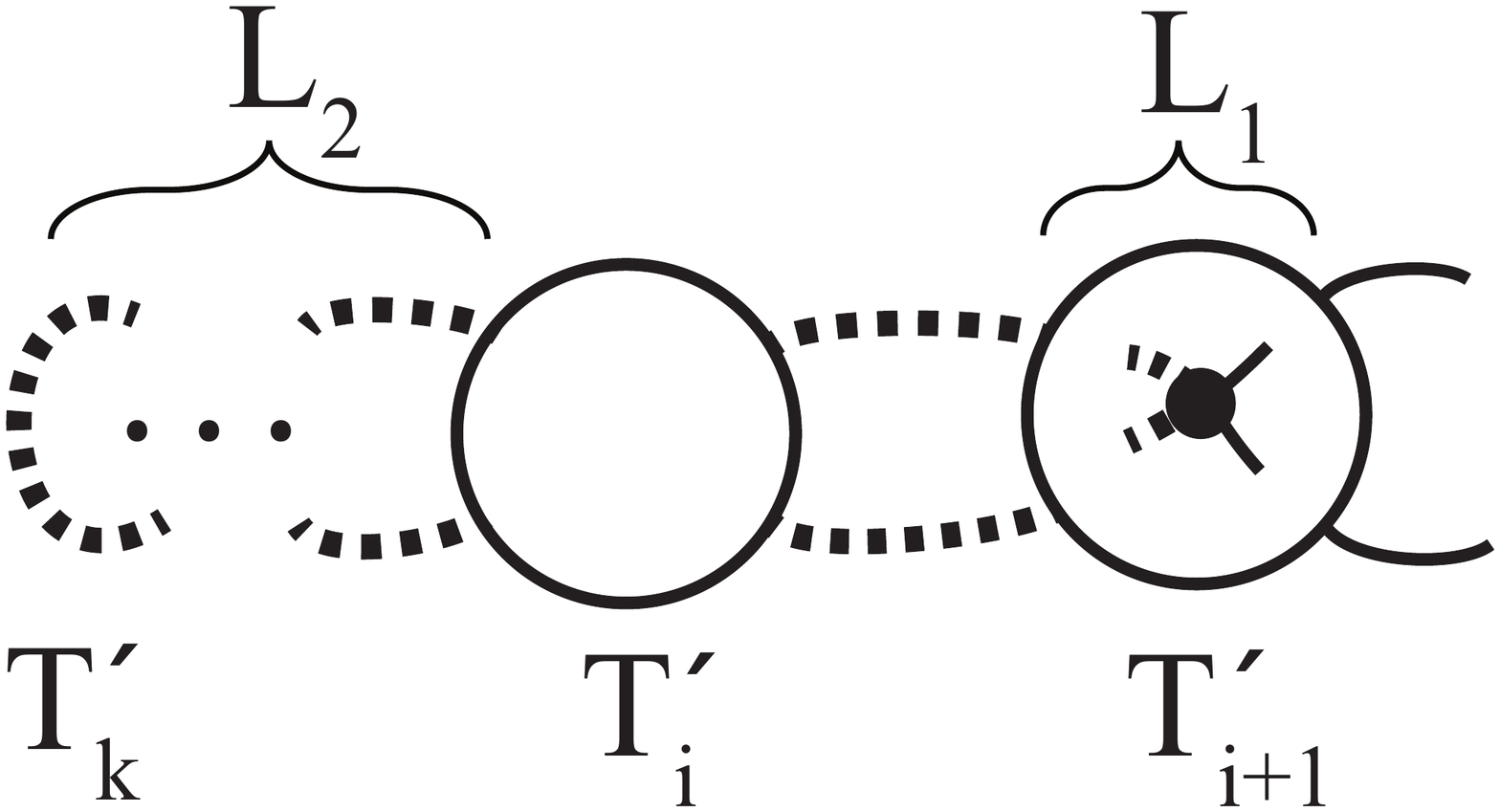}
\caption{On the left, $T_k'$ contains a vertex; and on the right, $T_k'$ has $\infty$-parity.}
\label{novert}
\end{center}
\end{figure}

 Now in any of the above cases, let $L$ denote the arc $L_1$ (in $T_{i+1}'$) together with the arc $L_2$ (to the left of $T_{i+1}'$), as well as the strands of $T_i$, and the arcs joining $T_i'$ and $T_{i+1}'$.  If $T_i$ has $\infty$-parity, then $L$ has two components, and otherwise $L$ has a single component.  In either case, observe that $L$ is the connected sum of some (possibly trivial) knot that lies to the left of $T_i$, together with the denominator closure $D(T_i)$, and some (possibly trivial) knot that lies to the right of $T_i$.  Now, as in the case at the beginning of the claim, $L$ is a non-trivial knot or link.  As this is contrary to our assumption, this proves Claim 1.  
 \medskip
 
 Hence from now on we assume that for every $i$, $T_i'$ contains a vertex, and hence $v_i^R$ is in the rightmost box of $T_i'$. Thus it follows from Lemma~\ref{pathfinding} that every pair of distinct points in any $T_i'$ is joined by a simple path in $T_i'$.\bigskip


 \noindent {\bf Claim 2:} $V(T')$ can be simplified so that there is at most one crossing between any pair of adjacent vertices in each $T_i'$.

 \medskip

Suppose that some $T_i'$ has two or more crossings between a pair of adjacent vertices $v_1$ and $v_2$ contained in boxes $(A_i^j)'$ and $(A_i^k)'$, respectively. 
Note that by Simplifying Assumption (1), we can assume that there are no crossings in any box containing a vertex.  Thus the boxes $(A_i^j)'$ and $(A_i^k)'$ must be distinct. Hence without loss of generality $j < k$.  Thus $v_2$ is to the right of $v_1$.  Also, as we saw at the beginning of the proof, we can assume that there are no crossings between $v_i^R$ and the next vertex to its left.  Hence $v_2$ cannot be the rightmost vertex of $T_i'$.

 Now let $B$ be a ball containing $v_1$ and $v_2$ together with the portion of $T_i'$ between $v_1$ and $v_2$ as illustrated in Figure~\ref{crossingsbetween2};  and let $a$ and $b$ be the points of $\partial B\cap T_i'$ which are separated from $v_1$ and $v_2$ by crossings, as indicated in the figure.  %
\begin{figure}[http]
\begin{center}
\includegraphics[width=8cm]{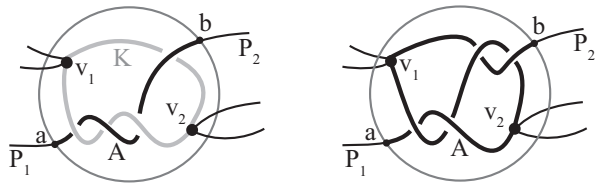}
\caption{The ball $B$ contains the portion of $T_i'$ between $v_1$ and $v_2$.}
\label{crossingsbetween2}
\end{center}
\end{figure}
Note that since $v_1$ and $v_2$ are adjacent vertices, there are no other vertices in $B$.  Thus either $T_i'\cap B$ contains two edges which go between $v_1$ and $v_2$ creating a simple closed curve $K$ and a disjoint arc $A$ going from $a$ to $b$ (as illustrated on the left in  Figure~\ref{crossingsbetween2}), or  $T_i'\cap B$ contains a single arc $A$ joining $a$ and $b$ which goes through both $v_1$ and $v_2$ (as illustrated on the right in Figure~\ref{crossingsbetween2}).

 Now since $v_1$ is contained in the box $(A_i^j)'$, there is a path $P_1$ going leftward from $a$ to the NW, SW, or SE point of $T_i'$ which does not pass through any box $(A_i^t)'$ with $t\geq j$.  Also, there is a path $P_2$ going rightward from $b$ to the rightmost vertex $v_i^R$, and then to the NE point of $T_i'$ which does not pass through any $(A_i^s)'$ with $s\leq k$.  In particular, neither $P_1$ nor $P_2$ contains $v_1$ or $v_2$.

 We will now define a simple closed curve $J$ that contains $P_1\cup P_2\cup A$.  We first do this in the case where  $P_1$ goes from $a$ to the SE point of $T_i'$ as illustrated by the dotted arc in Figure~\ref{C2C1}.  In this case, if $i\not =n$, we extend $P_1$ and $P_2$ rightward to the SW and NW points of $T_{i+1}'$ respectively.  Then by Claim 1 and Lemma~\ref{pathfinding} applied to $T_{i+1}'$, we can join $P_1$ and $P_2$ by a simple path in $T_{i+1}'$.  If $i=n$, then we can extend $P_1$ and $P_2$ rightward until they meet at $w$.  Thus either way we can join $P_1$ and $P_2$ by a simple path.  This give us a simple path $P$ from $a$ to $b$ whose interior is disjoint from $B$.  Now let $J$ denote the simple closed curve $P\cup A$. Then $J$ meets $\partial B$ only in the points $a$ and $b$.

\begin{figure}[http]
\begin{center}
\includegraphics[width=8cm]{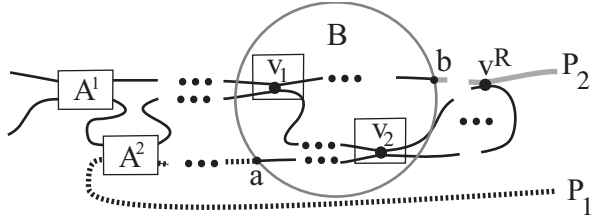}
\caption{There is a simple path $P$ from $a$ to $b$ consisting of $P_1$, $P_2$, and an arc in $T_{i+1}'$.}
\label{C2C1}
\end{center}
\end{figure}

 Next suppose that $P_1$ does not go to the SE point of $T_i'$.  Since $P_1$ does not go to the NE point of $T_i'$,  without loss of generality we can assume $P_1$ goes from $a$ to the SW point of $T_i'$ as illustrated by the dotted arc in Figure~\ref{C2C2}. 
\begin{figure}[http]
\begin{center}
\includegraphics[width=8cm]{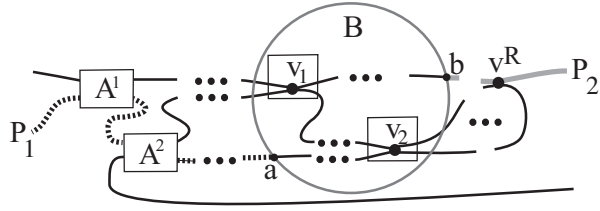}
\caption{We extend $P_1$ leftward and $P_2$ rightward until they meet at the closing vertex $w$.}
\label{C2C2}
\end{center}
\end{figure}
 By Claim 1 and Lemma~\ref{pathfinding}, we can now extend $P_1$ and $P_2$ leftward and rightward respectively until they meet at the closing vertex $w$ giving us a simple path $P$ from $a$ to $b$ whose interior is disjoint from $B$.  Now let $J$ denote the simple closed curve $P\cup A$. Again $J$ meets $\partial B$ only in the points $a$ and $b$.

In either of the above cases, if the arc $A$ contains $v_1$ and $v_2$, we let $L=J$, and otherwise we let $L=J\cup K$.  Now since $T_i'$ has at least two crossings between $v_1$ and $v_2$, the vertex closure $V(L \cap B)$ contains at least two crossings and is reduced and alternating. Thus it follows from Thistlethwaite's Theorem and Schubert's Theorem that $L$ is a non-trivial knot or link in $V(T')$.  As this is contrary to our assumption, we have proven Claim~2.  

Hence from now on we assume that there is at most one crossing between any pair of adjacent vertices in each $T_i'$.
\bigskip


\noindent {\bf Claim 3:} All crossings to the left of the leftmost vertex in each $T_i'$ can be removed.

\medskip

If $T$ is rational, then we can remove all of the crossings to the left of the leftmost vertex $v^L$ by untwisting about the closing vertex $w$ from left to right, as illustrated in Figure~\ref{ratcaseuntwist}.  Thus for the rest of the proof of this claim we assume that $T$ is not rational, and hence $n>1$.

\begin{figure}[http]
\begin{center}
\includegraphics[width=11cm]{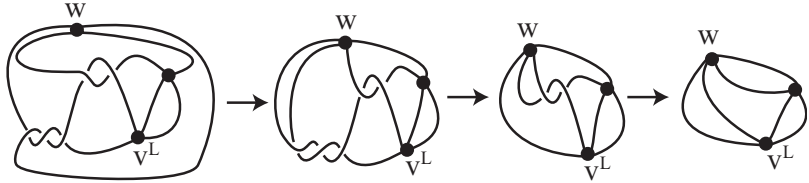}
\caption{If $T$ is rational, we can remove all of the crossings to the left of $v^L$. }
\label{ratcaseuntwist}
\end{center}
\end{figure}

Let $T_i'$ be the leftmost subgraph of $T'$ that contains at least one crossing to the left of its leftmost vertex $v_i^L$.   By Simplifying Assumption (1), there are no crossings in the box with $v_i^L$.  Thus $(A_i^1)'$ cannot contain any vertices, and hence $A_i^1=(A_i^1)'$.   %
\begin{figure}[http]
\begin{center}
\includegraphics[width=11cm]{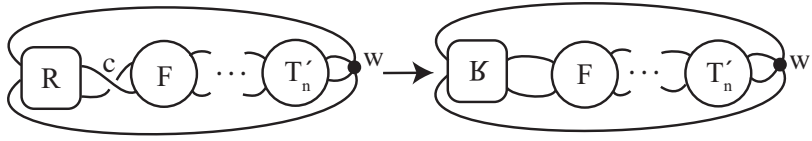}
\caption{Removing a crossing of $A_i'$. }
\label{case1}
\end{center}
\end{figure}
Suppose that $A_i^1$ contains a crossing. Let $c$  denote the leftmost crossing of $A_i^1$; let $R$ denote a ball whose intersection with $V(T')$ is $(T_1+\dots+T_{i-1})'$; and let $F$ denote the part of $T_i'$ to the right of $c$ (see the left side of Figure~\ref{case1}).

 We flip $R$ over to remove the crossing $c$.  This adds a crossing to the left of $R$ which can be removed by untwisting the strands around the closing vertex $w$.  Thus we get the illustration on the right of Figure~\ref{case1}.

 We repeat this operation until we have removed all of the crossings in $A_i^1$.  This proves the claim in the case where $v_i^L$ is in $(A_i^1)'$ or $(A_i^2)'$.  Thus we assume for the sake of contradiction that $v_i^L$ is not in $(A_i^1)'$ or $(A_i^2)'$.  
 
 Next suppose there is only one crossing  in $T_i'$ to the left of $v_i^L$.  Since every box $(A_i^k)'$ with $k>1$ must either contain a crossing or a vertex, this means that $(A_i^2)'$ contains one crossing and $v_i^L$ is in $(A_i^3)'$.  Hence $(A_i^2)'$ has no vertices and $(A_i^3)'$ has no crossings.  Thus we have the illustration on the left of Figure~\ref{case2}, where the ball $R$ contains the subgraphs $T_1'$, \dots, $T_{i-1}'$ and the ball $F$ contains the boxes $(A_i^j)'$ with $j>3$.  We can now remove the crossing in $A_i^2$ by flipping both $R$ and $F$ and untwisting the strands around $w$ as illustrated on the right side of Figure~\ref{case2}.  Thus we assume there are at least two crossings in $T_i'$ to the left of $v_i^L$.  

\begin{figure}[http] 
\begin{center} 
\includegraphics[width=11cm]{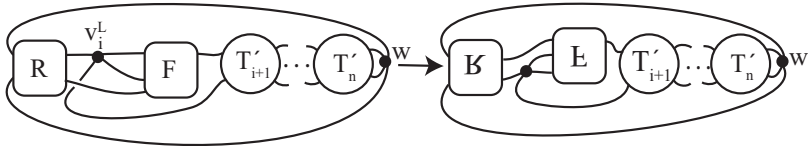}
\caption{Removing a single crossing in $A_i^2$ when $v_i^L$ is in $(A_i^3)'$.}
\label{case2}
\end{center}
\end{figure}

Now let $B$ be a ball containing $v_i^L$ together with the part of $T_i'$ that is to the left of $v_i^L$ as illustrated in Figure~\ref{Claim3Proof}.  
\begin{figure}[http]
\begin{center}
\includegraphics[width=9cm]{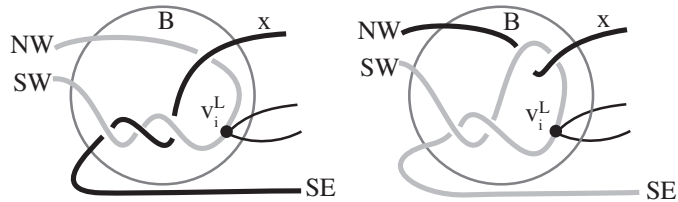}
\caption{We can extend the bold black and grey arcs to get one or two simple closed curves.}
\label{Claim3Proof}
\end{center}
\end{figure}
Since $T_i'$ contains the vertex $v_i^R$ in its rightmost box, the arc marked $x$ can be extended rightward by a simple path to the NE point of $T_{i}'$.  Now the NW and SW points of $T_i'$ are joined together either by an arc through $v_{i-1}^R$ or through $w$ in the case where $i=1$, and the NE and SE points of $T_i'$ are joined together either by an arc through $T_{i+1}'$ or through $w$ if $i=n$.  These arcs, together with the bold black and grey arcs in Figure~\ref{Claim3Proof}, give us one or two simple closed curves which we denote by $L$.  Since $V(L\cap B)$ is reduced and alternating and has at least two crossings, it follows from Thistlethwaite's Theorem and Schubert's Theorem that $L$ is a non-trivial knot or link.  As this contradicts our assumption, this case cannot arise.

Thus we have proven Claim~3.  Hence from now on we assume that there are no crossings to the left of the leftmost vertex in every $T_i'$.

\medskip

It follows from our hypotheses together with Claims 1--3 that the only crossings remaining in $V(T')$ are isolated crossings between two adjacent vertices within a single $T_i'$.  

Let $x$ denote the leftmost crossing in $V(T')$.  Then $x$ is between a pair of vertices $v_a$ and $v_b$ in some $T_i'$. Since there is at most one crossing between any pair of adjacent vertices $v_a$ and $v_b$ are in boxes $(A_i^j)'$ and $(A_i^{j+2})'$ respectively, and the crossing $x$ is in the box $(A_i^{j+1})'=A_i^{j+1}$.  Let $G$ denote a ball around all of the boxes $(A_i^k)'$ with $k<j$ in $T_i'$, and let $F$ denote a ball around all of the boxes $(A_i^k)'$with $k>j+2$ in $T_i'$ (see Figure~\ref{finalmove}).  

\begin{figure}[http]
\begin{center}
\includegraphics[width=10cm]{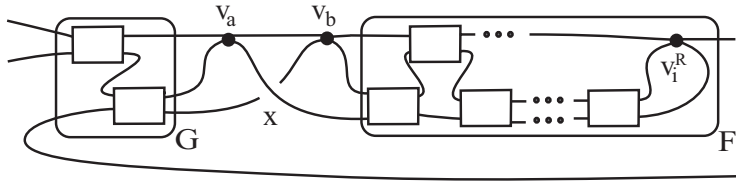}
\caption{$T_i'$ has a single crossing between $v_a$ and $v_b$.}
\label{finalmove}
\end{center}
\end{figure}

Let $B$ denote a ball containing all of the $T_s'$ such that $s<i$.  Then $V(T')$ is the embedded graph illustrated on the top of Figure~\ref{case3}.  Note that $B$ and $G$ contain no crossings since $x$ is the leftmost crossing in $V(T')$.  We now flip $F$ over to remove the crossing $x$ from $V(T')$ as illustrated on the bottom of Figure~\ref{case3}.  We repeat the above argument to sequentially remove all of the remaining crossings in the projection of $V(T')$.  

\medskip

\begin{figure}[http]
\begin{center}
\includegraphics[width=7cm]{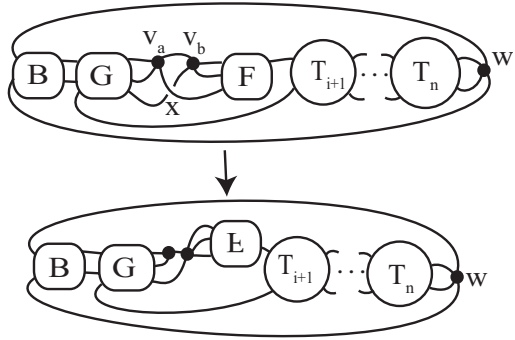}
\caption{We can flip $F$ over to remove the crossing between $v_a$ and $v_b$.}
\label{case3}
\end{center}
\end{figure}

This gives us a planar embedding of $V(T')$.  Thus $V(T')$ is not a ravel.\end{proof}
\bigskip


\section{Proof of the Forward Direction of Theorem~\ref{monthm}}\label{pfforward}

\begin{prop}\label{forward}
Let $T=T_1 + \dots + T_n$ be a projection of a Montesinos tangle in standard form, and let $T'$ be obtained from $T$ by replacing at least one crossing by a vertex.  Suppose that the vertex closure $V(T')$ is a ravel. Then $T'$ is an exceptional vertex insertion.\end{prop}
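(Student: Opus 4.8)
The plan is to assume that $V(T')$ is a ravel and to verify the four conditions of Definition~\ref{exceptional} one at a time. The single engine driving every step is the following dichotomy: whenever the configuration of vertices and crossings in $T'$ differs from that of an exceptional vertex insertion, I will either exhibit a non-trivial knot or link inside $V(T')$, contradicting that a ravel contains no non-trivial knots or links, or show that $V(T')$ can be isotoped into the plane, contradicting that a ravel is non-planar. The knots and links will be produced exactly as in the earlier arguments: I use Lemma~\ref{pathfinding} to build simple closed curves that realize a connected sum of a denominator closure $D(T_i)$ of some subtangle with possibly trivial summands, and then certify non-triviality using Wolcott's Theorem together with Schubert's Theorem, or, when the relevant piece is reduced and alternating with at least two crossings, using Thistlethwaite's Theorem together with Schubert's Theorem. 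Throughout I invoke the Simplifying Assumptions to clear away removable crossings, and I use the contrapositive of Lemma~\ref{nofinal}, which guarantees that some $T_i'$ containing a vertex has at least two crossings to the right of its rightmost vertex.

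For condition~(1) I first record a parity count: if $p$ denotes the number of summands $T_i$ with $\infty$-parity, then $T$ has $0$-parity when $p=0$ and $\infty$-parity when $p\ge 1$, and in the latter case the two strands of $T$ co-bound $p-1$ closed components. To see that $p\ge 1$, suppose $p=0$. If some vertex of $T'$ has at least two crossings to its right, I isolate the reduced alternating subtangle $R$ consisting of the boxes to the right of that vertex and, using Lemma~\ref{pathfinding}, close it up inside $V(T')$ to a simple closed curve carrying a reduced alternating diagram with at least two crossings, which is a non-trivial knot by Thistlethwaite's Theorem and Schubert's Theorem; if no vertex has two crossings to its right, then Lemma~\ref{nofinal} makes $V(T')$ planar. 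Either way $V(T')$ is not a ravel. To see that $p\le 1$, suppose $p\ge 2$; then, following the multiple-$\infty$ case in the proof of Theorem~\ref{theorem1}, I extend the NE--SE strand of $T_n$ rightward to $w$ and the NW--SW strand leftward to the next $\infty$-parity summand to form a link $L$ that is a connected sum with $D(T_n)$, which is non-trivial because $T_n$ is not a trivial vertical tangle. Thus exactly one summand $T_j$ has $\infty$-parity. Finally, if $T_j'$ contained a vertex, I clear the crossings to the right of its rightmost vertex, apply Lemma~\ref{pathfinding} inside $T_j'$ to obtain a simple path realizing a non-trivial denominator closure, and again obtain a non-trivial knot or link; hence $T_j'$ has no vertices, completing condition~(1).

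For the remaining summands $k\neq j$ I establish conditions~(2)--(4) by reusing the local arguments from the proof of Lemma~\ref{nofinal}. If two adjacent vertices of some $T_k'$ had at least two crossings between them, or if there were at least two crossings to the left of the leftmost vertex of $T_k'$, then Lemma~\ref{pathfinding} lets me close up a reduced alternating piece with at least two crossings, and Thistlethwaite's Theorem with Schubert's Theorem produce a non-trivial knot or link. Combining this with Simplifying Assumption (1), that no box containing a vertex has crossings, and Simplifying Assumption (2) pins the vertex structure of each $T_k'$ down to a single vertex $v_k$ placed in $(A_k^2)'$, or in $(A_k^3)'$ when $A_k^2$ is a single crossing, which is condition~(2). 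Condition~(3), that the subtangle $R_k$ to the right of $v_k$ carries at least two crossings, then follows because otherwise every vertex-containing $T_i'$ would have at most one crossing to the right of its rightmost vertex and Lemma~\ref{nofinal} would force $V(T')$ to be planar. For condition~(4), I argue that if some $T_k'$ had no loop through $v_k$, then the two strands of $V(T')$ thread $v_k$ without forming a local cycle, the vertex behaves like a removable crossing, and a direct isotopy removes all remaining crossings, so $V(T')$ would be planar.

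The main obstacle is the interlock between conditions~(2) and~(4): these are positive structural statements about the placement of the vertices rather than direct exhibitions of a knot or link, so neither can be read off from a single connected-sum computation. The loop condition~(4) in particular requires a careful case analysis of how the two strands of $V(T')$ pass through $v_k$ and a verification that the absence of a local loop permits $V(T')$ to be untwisted completely into the plane (this is also what separates the exceptional case from the $\theta_4$ situation of Theorem~\ref{onecase}). A secondary technical difficulty, present at every step, is ensuring that the preliminary untwisting used to put each relevant $T_i'$ into a form with no crossings to the right of its rightmost vertex, so that Lemma~\ref{pathfinding} applies, is realized by an ambient isotopy of $V(T')$ and therefore does not change the knot or link type of any simple closed curve later extracted from $V(T')$.
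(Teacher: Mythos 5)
Your overall skeleton (first isolate a unique $\infty$-parity summand, then pin down the structure of the remaining summands, with every contradiction coming either from a non-trivial knot or link or from planarity) matches the paper's, but several of your steps fail because you import arguments from settings where no vertices have been inserted, or where Lemma~\ref{pathfinding} is available, into a setting where neither holds. First, to rule out a second $\infty$-parity summand you ``extend the NE--SE strand of $T_n$'' and invoke the connected-sum-with-$D(T_n)$ computation from Theorem~\ref{theorem1}; but that computation presupposes the relevant summands are vertex-free, and at this stage of your argument you have not yet produced a vertex-free $\infty$-parity summand. If every $\infty$-parity summand contains vertices, there is no ``NE--SE strand,'' and $D(T_n)$ need not sit inside $V(T')$ as a connected summand at all. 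The paper avoids this by first proving (via the contrapositive of Lemma~\ref{nofinal} together with its Observation~1) that some $T_k'$ has at least two crossings in $R_k$, hence admits no NE--SE path, and that the only configuration that can block the extension of the resulting arc to the closing vertex is an $\infty$-parity summand \emph{without} vertices; only then, with the intact strands of this $T_j$ in hand, does it rule out other $\infty$-parity summands (its Observation~2). Second, your treatment of conditions (2) and (3) ``reuses the local arguments of Lemma~\ref{nofinal},'' but those arguments rest on Lemma~\ref{pathfinding}, which requires no crossings to the right of the rightmost vertex --- exactly what fails in every $T_k'$ once condition (3) holds. In particular you never prove that each $T_k'$ has \emph{exactly one} vertex (the paper's Claim~3, its most delicate step, which analyzes how the extension of $T_j$'s strand meets $T_k'$ in two disjoint arcs $C_1$ and $C_2$); ruling out multiple vertices is not a matter of counting crossings between adjacent vertices. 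Your deduction of (3) also has a quantifier error: if one summand has at most one crossing in its $R_k$, that does not place \emph{every} vertex-containing summand under the hypothesis of Lemma~\ref{nofinal}; the paper instead derives a contradiction with its Observation~2. Similarly, the restriction of $v_k$ to $(A_k^2)'$ or $(A_k^3)'$ needs the paper's separate argument that the subtangle $W_k$ to the left of $v_k$ is neither horizontal nor trivial vertical, so that Wolcott's and Schubert's Theorems yield a knot or link; crossing counts alone do not give it.

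The clearest error is your argument for the loop condition (4): you claim that if $T_k'$ has no loop through $v_k$, then ``the vertex behaves like a removable crossing'' and $V(T')$ can be isotoped into the plane. This is refuted by Theorem~\ref{onecase}: when a single crossing is replaced by a vertex and $V(T')$ is a $\theta_4$ graph --- precisely a no-loop situation --- $V(T')$ contains a non-trivial knot, so it is certainly not planar, and no ``direct isotopy'' can remove its crossings. The correct contradiction runs in the opposite direction: if there is no loop through $v_k$, then (checking the two possible parities of $R_k$) there is a path in $T_k'$ from its NE point to its SE point, and since $R_k$ carries at least two crossings, such a path closes up to a reduced alternating diagram with at least two crossings, giving a non-trivial knot or link by Thistlethwaite's and Schubert's Theorems, contradicting that $V(T')$ is a ravel.
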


\begin{proof}  Given any $k$ such that $T_k'$ contains a vertex, let $v_k^R$ denote the rightmost vertex of $T_k'$.  
Then $T_k'$ has the form illustrated in Figure~\ref{vfgeneralform}, where $v_k^R$ may be in the top or the bottom row, and $R_k$ is a ball containing all of the boxes of $T_k'$ that are to the right of $v_k^R$.  

\begin{figure}[http]
\begin{center}
\includegraphics[width=4cm]{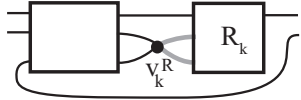}
\caption{$v_k^R$ is the rightmost vertex of $T_k'$, and $R_k$ contains all of the crossings of $T_k'$ that are to the right of $v_k^R$.}
\label{vfgeneralform}
\end{center}
\end{figure}

We now prove that $T'$ is an exceptional vertex insertion by sequentially proving the following list of claims. 
\medskip

\begin{enumerate}
\item  Some $T_j$ has $\infty$-parity, and $T_j'$ has no vertices. 

\medskip

\item For all $k\not =j$, $T_k$ does not have $\infty$-parity, $T_k'$ has at least one vertex, and $R_k$ has at least two crossings. 
\medskip

\item For all $k\not =j$, $T_k'$ contains exactly one vertex $v_k$. 
\medskip

\item  For all $k \neq j $, the vertex $v_k$ is in $(A_k^2)'$ or possibly in $(A_k^3)'$ if $A_k^2$ has a single crossing.
\medskip

\item For all $k \neq j $, the tangle $T_k'$ has a loop containing $v_k$.

\end{enumerate}

\bigskip

We begin by proving Observation 1 which will be used in the proof of Claim 1.  

 \bigskip

\noindent{\bf Observation 1:} If for some $k$, there are at least two crossings in $R_k$, then there is no path in $T_k'$ between its NE and SE point.  Hence $T_k'$ contains paths from its NE and SE points to its NW and SW points.  
\medskip

To prove Observation 1, suppose that there are at least two crossings in $R_k$ and there is a path in $T_k'$ between its NE and SE point.  Then we can extend this path rightward until its ends meet either at a vertex in some $T_j'$, in some $T_j$ with $\infty$-parity, or at the closing vertex $w$. This gives us a simple closed curve $L_1$. If $L_1$ contains $v_k^R$, then all of the crossings of $R_k$ are in $L_1$.  In this case, let $L=L_1$.  Otherwise, as can be seen in  Figure~\ref{vfgeneralform}, there is a grey simple closed curve $L_2\subseteq T_k'$ containing $v_k^R$ such that all of the crossings in $R_k$ are contained in $L_1\cup L_2$. In this case, we let $L=L_1\cup L_2$.

Now let $B_k$ denote the tangle ball for $T_k$.  Then $L$ is the union of $L \cap B_k$ together with an arc outside of $B_k$. Since $L \cap B_k$ contains at least two crossings and is reduced and alternating, by Thistlethwaite's Theorem $L$ is a non-trivial knot or link. However, this contradicts the hypothesis that $V(T')$ is a ravel.  Thus we have proven the observation.
\bigskip

\noindent {\bf Claim 1:}  Some $T_j$ has $\infty$-parity, and $T_j'$ has no vertices. 
\medskip

Since $V(T')$ is a ravel, we know by Lemma~\ref{nofinal} that some $T_k'$ containing a vertex has at least two crossings in $R_k$. Now by Observation~1, there is an arc $P_k$ in $T_k'$ from its NE point to its SW or NW point.  Suppose the endpoints of  $P_k$ can be extended rightward and leftward to $w$, so that we obtain a simple closed curve $L_1$.  If $L_1$ contains $v_k^R$, let $L=L_1$.  Otherwise, there is another simple closed curve $L_2\subseteq T_k'$ containing $v_k^R$ such that all of the crossings in $R_k$ are contained in $L_1\cup L_2$. In this case, we let $L=L_1\cup L_2$.  Now as in the proof of Observation 1, this implies that $L$ is a non-trivial knot or link contradicting the hypothesis that $V(T')$ is a ravel.   Thus $P_k$ cannot be extended so that it passes through every $T_j'$. 

 Thus there must exist some $j$ such that $T_j$ has $\infty$-parity.  Now suppose that $T_j'$ has at least one vertex.  If there are less than two crossings to the right of the rightmost vertex $v_j^R$, then by Simplifying Assumption (2), we can assume there are no crossings in $T_j'$ to the right of $v_j^R$.  Hence by Lemma~\ref{pathfinding}, we could again extend $P_k$ through $T_j$.  On the other hand if there are at least two crossings to the right of $v_j^R$, then by Observation 1, there are paths from the NE and SE points of $T_j'$ to the NW and SW points of $T_j'$.  Thus we could again extend $P_k$ through $T_j'$.  Hence $T_j'$ cannot have any vertices.  Thus we have proven Claim 1.
 \medskip
 
 We now prove Observation~2, which will be used in the proof of Claim 2.
 
 \bigskip

\noindent{\bf Observation 2:}  A single strand of $T_j$ cannot be extended to a simple closed curve in $T'$. 
\medskip

Suppose that some strand of $T_j$ can be extended to a simple closed curve $L_1$ in $T'$.   We now extend ends of the other strand of $T_j$ until they meet at or before $w$.  Since $T_j$ has $\infty$-parity, this gives us a simple closed curve $L_2$ whichis disjoint from $L_1$.  Then $L=L_1\cup L_2$ is the connected sum of $D(T_j)$ and two (possibly trivial) knots.  Since $T_j$ has $\infty$-parity, $D(T_j)$ is a link; and because $T_j$ is non-trivial, by Wolcott's Theorem $D(T_j)$ is non-trivial.  Hence $L$ is also a non-trivial link.  As this contradicts the hypothesis that $V(T')$ is a ravel, Observation 2 follows.

\bigskip

\noindent {\bf Claim 2:}  For all $k\not =j$, $T_k$ does not have $\infty$-parity, $T_k'$ has at least one vertex, and $R_k$ has at least two crossings.

\medskip

Suppose that there is some $k \neq j$ such that $T_k$ has $\infty$-parity. Without loss of generality, $k>j$.  Then we can extend one of the strands of $T_j$ to the right so that the ends meet either in $T_k$ or before.
 As this violates Observation 2, no $T_k$ with $k\not= j$ can have $\infty$-parity. 

Suppose that some $T_k'$ with $k\not =j$ has no vertices.  Without loss of generality, $k>j$.  Since $T_j$ has $\infty$-parity, we can extend the western endpoints of $T_k$ leftward until they meet at or before $T_j$, and we can extend  the eastern endpoints of $T_k$ rightward until they meet at or before $w$.  Let $L$ be the simple closed curve obtained as the union of $T_k$ with these rightward and leftward extensions.  Now $L$ is the connected sum of $D(T_k)$ with (possibly trivial) knots on the right and left.  Recall that since $T$ is in standard form, $T_k$ is not horizontal.  Thus by Wolcott's Theorem $L$ is a non-trivial knot.  As this is contrary to our hypothesis, $T_k'$ must have at least one vertex.

Finally, suppose that some $T_k'$ has at most one crossing in $R_k$.  Then by Simplifying Assumption (2), $T_k'$ has no crossings to the right of $v_k^R$.  Hence by Lemma \ref{pathfinding}, there is a simple path in $T_k'$ between its NW and SW point.  Thus again we can extend one of the strands of $T_j$ to a simple closed curve in $T'$. As this again violates Observation 2, $R_k$ must have at least two crossings.

\bigskip

\noindent {\bf Claim 3:}  For all $k\not =j$, $T_k'$ contains exactly one vertex.

\medskip

Suppose that some $T_k'$ contains at least two vertices.  Without loss of generality $k>j$.  Let $v_k^L$ be the leftmost vertex in $T_k'$.  Then we can illustrate $T_k'$ by Figure~\ref{2vertices}, where all of the crossings of $T_k'$ are in the balls $Q_k$, $S_k$, and $R_k$, and any other vertices of $T_k'$ are contained in $S_k$.  Note that in spite of the way we have illustrated them, $v_k^L$ and $v_k^R$ can each be in either the top or the bottom row.

\begin{figure}[http]
\begin{center}
\includegraphics[width=10cm]{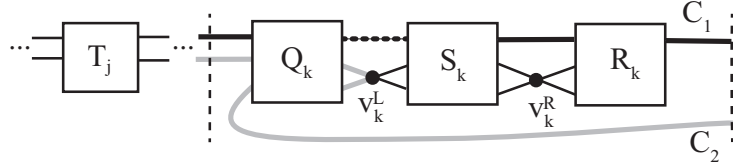}
\caption{$L_1$ intersects $T_k'$ in disjoint arcs $C_1$ and $C_2$.}
\label{2vertices}
\end{center}
\end{figure}

Now we extend the ends of the NE-SE strand of $T_j$ rightward until they meet.  This must occur at the closing vertex $w$ or else it would violate Observation 2.  After removing any loops, we obtain a simple closed curve $L_1$ which intersects $T_k'$ in a pair of disjoint arcs $C_1$ and $C_2$ each going between an Eastern and Western point of $T_k'$.  Without loss of generality, we assume the endpoints of $C_1$ are the NE and NW points of $T_k'$ and the endpoints of $C_2$ are the SE and SW points of $T_k'$ as illustrated in Figure~\ref{2vertices}.

Observe that two arcs of $C_2$ enter $Q_k$ from the left.  Since $Q_k$ has no vertices and does not contain the rightmost box of $T_k$, an arc that enters on the left must leave on the right.  Thus two arcs of $C_2$ must exit $Q_k$ on the right.  Now $C_1$ also exits $Q_k$ on the left, and hence must enter $Q_k$ on the right.   Since $C_1$ and $C_2$ are disjoint, this means that the two arcs entering $v_k^L$ from the left must belong to $C_2$ and the dotted black arc in Figure~\ref{2vertices} belongs to $C_1$.  Furthermore, since $C_2$ does not contain any loops, $C_2$ cannot continue rightward beyond $v_k^L$.  In particular, $v_k^R$ cannot be in $C_2$.

Next suppose that the arc of $C_1$ from $R_k$ to $S_k$ passes through $v_k^R$.  Then $T_k'$ is illustrated in Figure~\ref{ChangeC1}, where the grey dotted arcs entering $S_k$ on the right are connected in some way to the grey dotted arcs exiting $S_k$ on the left.  In this case, there is a path in $T_k'$ going from its NE endpoint passing through both $v_k^L$ and $v_k^R$ and exiting $T_k'$ from its SE endpoint.  However, by Claim 2 we know that $R_k$ contains at least two crossings, and hence by Observation~1 no such path can exist.  Thus the arc of $C_1$ from $R_k$ to $S_k$ cannot pass through $v_k^R$ as it does in Figure~\ref{ChangeC1}.

\begin{figure}[h!]
\begin{center}
\includegraphics[width=10cm]{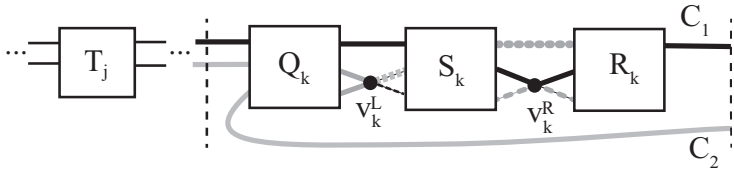}
\caption{$L_1$ intersects $T_k'$ in disjoint arcs $C_1$ and $C_2$.}
\label{ChangeC1}
\end{center}
\end{figure}

Hence either $C_1$ goes through $v_k^R$ and then reenters $R_k$, or $T_k'$ contains a simple closed curve $L_2$ that goes through $v_k^R$ and is disjoint from $C_1$.   In the first case, since $R_k$ is alternating and contains at least two crossings, $L_1$ is a non-trivial knot.  As this is contrary to our assumption, the second case must occur.   However, $R_k$ is itself a rational tangle in alternating $3$-braid form, and by Simplifying Assumption~1 there are no crossings in the same box as $v_k^R$.  Thus there must be at least two crossings between $L_1$ and $L_2$.  But this implies that $L_1\cup L_2$ is a non-trivial link.  As this is again contrary to our hypothesis, $T_k'$ must contain exactly one vertex.


\bigskip

\noindent {\bf Claim 4:} For all $k \neq j $, the vertex $v_k$ is in $(A_k^2)'$ or possibly in $(A_k^3)'$ if $A_k^2$ has a single crossing.

\medskip

If some $T_k'$ has its vertex in the first box, as illustrated in Figure~\ref{nofirst}, then there would be a path in $T_k'$ between its NE and SE points, which would violate Observation 1.

\begin{figure}[http]
\begin{center}
\includegraphics[width=7cm]{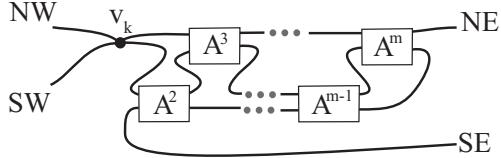}
\caption{If $v_k$ is in $A^1$, then there are paths in $T_k'$ joining the NW and SW points and joining the NE and SE points.}
\label{nofirst}
\end{center}
\end{figure}

Now suppose that some $T_k'$ has its vertex $v_k$ in a box $(A_k^p)'$ where either $p>3$ or $p=3$ and $A_k^2$ has more than one crossing. Let $W_k$ be the tangle consisting of $v_k$ and the part of $T_k'$ to the left of $v_k$, as illustrated in Figure~\ref{looksrational} (though $v_k$ could be in a box to the right of $A^k_3$).  Note that $W_k$ includes the black arcs to the left of $v_k$ but not the grey arcs to the right of $v_k$.  Then $W_k$ is a rational tangle; and since there are at least two crossings in $A_k^2, \dots, A_k^{p-1}$, the tangle $W_k$ is neither a horizontal tangle nor a trivial vertical tangle. 
\begin{figure}[http]
\begin{center}
\includegraphics[width=7cm]{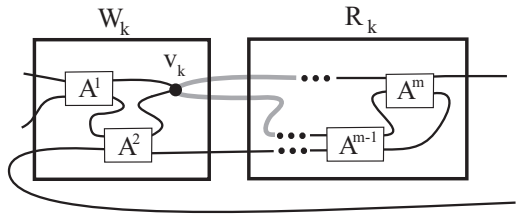}
\caption{$W_k$ is a rational tangle which is neither a horizontal tangle nor a trivial vertical tangle.}
\label{looksrational}
\end{center}
\end{figure}

Now there is a path that goes from the NW and SW points of $W_k$ leftward until its ends meet in $T_j$, at $w$, or at some other vertex.  Also there is a path that goes rightward from the NE and SE points of $W_k$ until the ends meet in $T_j$, at $w$, or at some other vertex.  Note that since $T_j$ has $\infty$-parity, at most one of these paths contains $w$. The union of the two strands of $W_k$ together with these leftward and rightward paths is the connected sum of $D(W_k)$ with two (possibly trivial) knots.  Since $W_k$ is neither horizontal nor a trivial vertical tangle, this connected sum is a non-trivial knot or link.  Thus the vertex $v_k$ must either be in $(A_k^2)'$ or possibly in $(A_k^3)'$ if $A_k^2$ has only one crossing.  
\bigskip

\noindent{\bf Claim 5:} For all $k \neq j $, $T_k'$ has a loop containing $v_k$.
\medskip

It follows from Claim 4 that $T_k'$ has one of the forms illustrated in Figure~\ref{Claim5}.

\begin{figure}[http]
\begin{center}
\includegraphics[width=12cm]{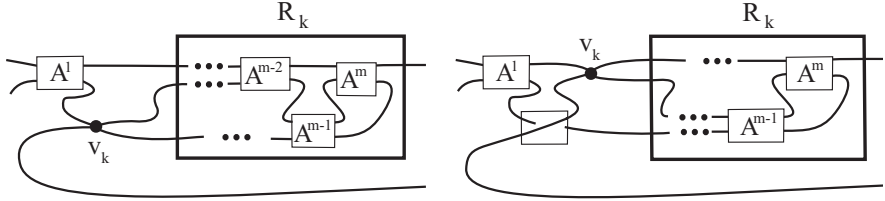}
\caption{$T_k'$ has one of these forms.}
\label{Claim5}
\end{center}
\end{figure}

First we consider the illustration on the left in Figure~\ref{Claim5}.  In this case, if $R_k$ has $0$-parity, then the strands going into $v_k$ from the right are connected together in $R_k$.  Hence they are part of a loop in $T_k'$.  On the other hand, if $R_k$ does not have $0$-parity, then there is a path from the NE point of $R_k$ to $v_k$.  We can then extend this path to get a path in $T_k'$ from its NE point to its SE point.  As this violates Observation 1, this cannot occur.

Next we consider the illustration on the right in Figure~\ref{Claim5}.  Now if $R_k$ has $\infty$-parity, then the strands going into $v_k$ from the right are connected together in $R_k$.  Hence they are part of a loop in $T_k'$.  But if $R_k$ does not have $\infty$-parity, then there is a path from the NE point of $R_k$ to $v_k$. Again we can extend this path to get a path in $T_k'$ from its NE point to its SE point violating Observation 1.  Thus in either case, $T_k'$ has a loop containing $v_k$.
\medskip

Now it follows from Claims 1 through 5 that $T'$ is an exceptional vertex insertion. 
\end{proof}
\bigskip


\section{The Proof of the Backward Direction of Theorem~\ref{monthm}}\label{pfbackward}
\medskip

In order to prove the backward direction of Theorem~\ref{monthm}, we make use of the following definition and theorem due to Sawollek  \cite{sawollek}.
 
\begin{defn}  Let $G$ be a 4-valent graph embedded in $\mathbb{R}^3$.  The set of {\em associated links} $S(G)$ consists of all knots and links that can be obtained from $G$ by replacing a neighborhood of each vertex of $G$ by a rational tangle.
\end{defn}

 \begin{Sawollek} \label{sawollektheorem} \cite{sawollek}
Let $G$ be a 4-valent graph embedded in $\mathbb{R}^3$. The set of associated links $S(G)$ is an isotopy invariant of $G$. 
 \end{Sawollek}

\begin{prop}\label{backward}
Let $T=T_1 + \dots + T_n$ be a projection of a Montesinos tangle in standard form, and suppose that $T'$ is obtained from $T$ by an exceptional vertex insertion. Then the vertex closure $V(T')$ is a ravel.
\end{prop}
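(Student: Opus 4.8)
The plan is to verify the two defining properties of a ravel separately: that $V(T')$ contains no non-trivial knots or links, and that $V(T')$ is non-planar. The organizing observation is that any knot or link contained in the $4$-valent graph $V(T')$ appears among the components of a \emph{flat smoothing}, by which I mean the knot or link obtained by replacing each vertex (the $v_k$ and the closing vertex $w$) by a \emph{trivial} ($0$- or $\infty$-) rational tangle. Since a trivial tangle is rational, these flat smoothings form a subfamily of the associated links $S(V(T'))$, so by Sawollek's Theorem they are tracked by any ambient isotopy of the graph, and since a sub-collection of the components of a trivial link is again trivial, it suffices to show that every flat smoothing of $V(T')$ is itself a trivial knot or link. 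Thus the argument splits into showing (A) every flat smoothing is trivial, and (B) $V(T')$ cannot be isotoped into the plane.

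For (A) I would exploit the exceptional structure directly. Condition (1) supplies a single summand $T_j$ with $\infty$-parity and no inserted vertex, while conditions (2) and (4) place the unique vertex $v_k$ of each remaining $T_k'$ in $(A_k^2)'$ (or $(A_k^3)'$) together with a loop through $v_k$. Smoothing $v_k$ so as to split off this loop turns the part of $T_k'$ to the left of $v_k$ into a horizontal rational tangle, and conditions (2)--(3) force every flat smoothing to traverse each $T_k'$ essentially once, exactly as in the final case of the proof of Theorem~\ref{theorem1}. Consequently each resulting simple closed curve is a connected sum of denominator closures $D(\,\cdot\,)$ of horizontal rational tangles, together with split loops, while the $\infty$-parity of $T_j$ governs the number of components. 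By Wolcott's Theorem each such closure is an unknot and each loop is unknotted, so by Schubert's Theorem the whole flat smoothing is a connected sum of unknots and unlinks, hence trivial. Here Sawollek's Theorem is what lets me first isotope $V(T')$ into a convenient standard position while certifying that no flat smoothing changes its isotopy type.

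For (B), note that since exactly one summand has $\infty$-parity and $n>1$, the underlying Montesinos tangle is non-rational. Filling each $v_k$ by a single crossing recovers a non-rational Montesinos tangle $\hat T$ whose unique $\infty$-parity summand is $T_j$, so by Theorem~\ref{theorem1} the graph $V(\hat T)$ contains a non-trivial knot or link; this is a non-trivial member of $S(V(T'))$. Together with (A) this already exhibits the ravel signature --- all flat smoothings trivial, yet $S(V(T'))$ non-trivial --- and I would promote it to genuine non-planarity by comparing $S(V(T'))$, as a set, with the associated-link set of any planar embedding of the underlying abstract graph, using Sawollek's Theorem together with Lemma~\ref{rat0lem}.

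The routine part is the bookkeeping in (A): one must check every choice of flat smoothing at every vertex and confirm that the loops and the precise position of each $v_k$ really do make each local contribution a horizontal-tangle closure. The genuine obstacle is (B). Non-planarity does \emph{not} follow from merely producing a non-trivial element of $S(V(T'))$, because a planar spatial graph can itself have non-trivial associated links; indeed all flat smoothings being trivial is necessary but far from sufficient for planarity, which is precisely why ravels exist. The crux is therefore to show that the full invariant $S(V(T'))$ cannot coincide with that of any planar embedding of the abstract graph --- equivalently, to upgrade the non-rationality of $\hat T$ into non-planarity of the spatial graph $V(T')$ itself.
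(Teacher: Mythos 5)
Your reduction of part (A) to flat smoothings fails, for two reasons. First, the organizing claim is false for general $4$-valent spatial graphs: a cycle contained in a graph may pass through a vertex using a \emph{diagonal} pair of edges, and no flat ($0$- or $\infty$-) smoothing connects such a pair, so a knot contained in the graph need not appear among the components of any flat smoothing. Second, and fatally, for the graphs at hand the flat smoothings are typically \emph{not} trivial links, so the statement you set out to prove in (A) is simply false. Condition (3) of an exceptional insertion forces $R_k$ to have at least two crossings; the loop $c_k$ uses one strand of $R_k$ while the through-curve of $V(T')$ uses the other. In the paper's own example (Figure~\ref{example}), these two strands cross precisely at the two crossings of $R_2$, which form an alternating clasp; hence the flat smoothing at $v_2$ that splits off the loop $c_2$ produces a two-component sublink with linking number $\pm 1$, i.e.\ a Hopf link. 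The graph $V(T')$ nevertheless contains no non-trivial link precisely because $c_k$ and the through-curve \emph{intersect} at the vertex $v_k$, so they never form a link inside the graph --- a distinction your smoothing argument erases. This is why the paper works directly with the cycles of the graph: it labels the loops $c_k$ and the two curves $a$ and $b$ through $w$, shows each is unknotted because it is a connected sum of numerator or denominator closures of single strands of rational tangles, and then observes that the only pairs of cycles that are actually disjoint in $V(T')$ have no crossings between their projections. A Wolcott--Schubert computation applied to smoothings cannot substitute for this, since the smoothings themselves carry non-trivial linking.

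For part (B) you correctly diagnose that exhibiting one non-trivial element of $S(V(T'))$ proves nothing and that one must show $S(V(T'))$ differs from the associated-link set of a planar embedding; but you then stop exactly there, and that step is the heart of the proposition. The paper supplies the missing construction: delete the loops $c_k$ and vertices $v_k$ for all $k>1$ with $k\neq j$, obtaining a subgraph $G$ with only two vertices $w$ and $v_1$ (non-planarity of a subgraph implies non-planarity of $V(T')$); observe that the planar embedding $G_0$ of the underlying abstract graph has $S(G_0)$ consisting only of split links, $2$-bridge knots and links, and connected sums of two such; then produce an element $N(P+U+T_j)$ of $S(G)$ --- where $U$ amalgamates $R_1$ with the tangle $Q$ inserted at $v_1$ --- which is a Montesinos link with three essential rational summands. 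Its double branched cover is a Seifert fibered space with three exceptional fibers, and by the classification of Seifert manifolds one can choose $P$ so that this cover is irreducible, not $S^1\times S^2$, and has infinite fundamental group, whence the link is prime and not $2$-bridge, so it lies outside $S(G_0)$ and Sawollek's Theorem gives non-planarity. Nothing in your sketch produces this, and the tools you cite cannot: Lemma~\ref{rat0lem} concerns the single-vertex closure of a $2$-string tangle and says nothing about the multi-vertex graph $V(T')$, while Theorem~\ref{theorem1} only yields knots and links contained in $V(T)$, which, as you yourself note, is insufficient. As it stands, the proposal establishes neither half of the ravel property.
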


\begin{proof}  By the definition of an exceptional vertex insertion, there is a single $T_j$ with $\infty$-parity, and $T_j'$ has no vertices. Without loss of generality we assume that $1<j\leq n$.  Also, for all $k\not =j$, $T_k'$ has a single vertex $v_k$ which is either in $(A_k^2)'$ or possibly in $(A_k^3)'$ if $A_k^2$ has only one crossing.  Furthermore, $R_k$ (the subtangle of $T_k$ consisting of the boxes to the right of $v_k$) is a rational tangle with at least two crossings and $T_k'$ has a loop containing $v_k$.  Now for each $k$ such that $v_k$ is in $(A_k^3)'$, we move $v_k$ to $(A_k^2)'$ by flipping $R_k$ as illustrated in Figure~\ref{onecrossingina2}.

\begin{figure}[http]
\begin{center}
\includegraphics[width=8cm]{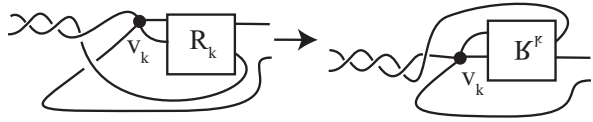}
\caption{When $v_k$ is in $A_k^3$, we flip $R_k$ to move $v_k$ to $(A_k^2)'$.}
\label{onecrossingina2}
\end{center}
\end{figure}

 Next, for each sequential $k>1$ such that $k\not =j$, we flip the part of the projection of $V(T')$ to the left of $A_k^1$ repeatedly to move the crossings of $A_k^1$ to $A_1^1$.  Then we remove all of the accumulated crossings from $A_1^1$ by twisting the strands around $w$. We illustrate this in Figure~\ref{flypeai}, where $A_1^1$ begins with zero crossings and $A_2^1$ begins with three crossings.  In the second picture we have moved the three crossings of $A_2^1$ to $A_1^1$, and in the third picture we have removed all of the crossings from $A_1^1$.  This gives us a projection of $V(T')$ such that for each $k\not =j$, the vertex $v_k$ is in $(A_k^2)'$ and all of the crossings of $T_k'$ are in $R_k$.

\begin{figure}[http]
\begin{center}
\includegraphics[width=10cm]{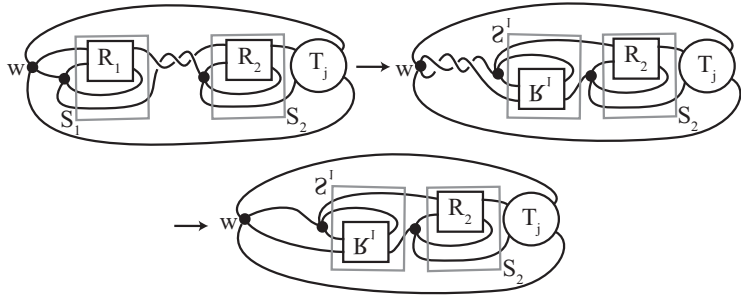}
\caption{We can remove the crossings from the first box of each $T_k'$ with $k\not=j$.}
\label{flypeai}
\end{center}
\end{figure}

After doing the moves illustrated in Figure~\ref{onecrossingina2} and Figure~\ref{flypeai}, there are four different ways that the edges can go in and out of each $R_k$, which we illustrate in Figure~\ref{Possibilities}.

\begin{figure}[http]
\begin{center}
\includegraphics[width=10cm]{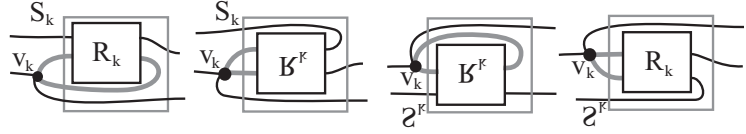}
\caption{The possibilities for how the strands enter and exit each $R_k$, with the loop $c_k$ indicated in grey.}
\label{Possibilities}
\end{center}
\end{figure}

 The leftmost illustration occurs if $v_k$ was originally in the second box so we did not have to flip $R_k$ as in Figure~\ref{onecrossingina2}, and in moving the crossings in the first boxes of the $T_i'$ to the left as in Figure~\ref{flypeai} we flipped $S_k$ zero or an even number of times.  The second illustration in Figure~\ref{Possibilities} occurs if $v_k$ was originally in the third box so we flipped $R_k$ as in Figure~\ref{onecrossingina2}, and in moving the crossings in the first boxes of the $T_i'$ to the left as in Figure~\ref{flypeai} we flipped $S_k$ zero or an even number of times.   The third illustration occurs if $v_k$ was in the second box so we did not flip $R_k$ as in Figure~\ref{onecrossingina2}, but in moving the crossings in the first boxes of the $T_i'$ to the left as in Figure~\ref{flypeai} we flipped $S_k$ an odd number of times.  The rightmost illustration in Figure~\ref{Possibilities} occurs if $v_k$ was in the third box so we flipped $R_k$ as in Figure~\ref{onecrossingina2}, and in moving the crossings in the first boxes of the $T_i'$ to the left as in Figure~\ref{flypeai} we flipped $S_k$ an odd number of times.  

Observe that regardless of which of the four illustrations occur, the only difference between the edges outside of $S_k$ is that the ``dangling edge''  at the left of $S_k$ (that is the one not going into $v_k$) may be above or below the vertex $v_k$.

In Figure~\ref{exceptionalspecific} we define a labeling of the edges of $V(T')$, keeping in mind that $T_j$ has $\infty$-parity and none of the $T_k$ with $k\not=j$ have $\infty$-parity.  In particular, we label the loop containing $v_k$ by $c_k$ and label the edges which are not loops consecutively as follows.  Let $a_1$ be the edge from $w$ to $v_1$, and let $a_2$ be the other edge with one endpoint at $v_1$.  We label the rest of the edges whose vertices are to the left of $T_j$ consecutively from one vertex to the next as $a_3$,\dots, $a_j$.  Then $a_j$ will have one endpoint at $w$, and hence $a=a_1\cup a_2\cup \dots  \cup a_j$ will be a simple closed curve.  Similarly, let $b_n$ be the edge of $V(T')$ from $w$ to the rightmost vertex $v_n$, and then consecutively label the edges whose endpoints are to the right of $T_j$ as $b_{n-1}$, \dots, $b_{j}$.  Then $b_{j}$ will also have an endpoint at $w$.  Thus $b=b_n \cup b_{n-1}\cup \dots \cup b_{j}$ will also be a simple closed curve.

\begin{figure}[http]
\begin{center}      
\includegraphics[width=12cm]{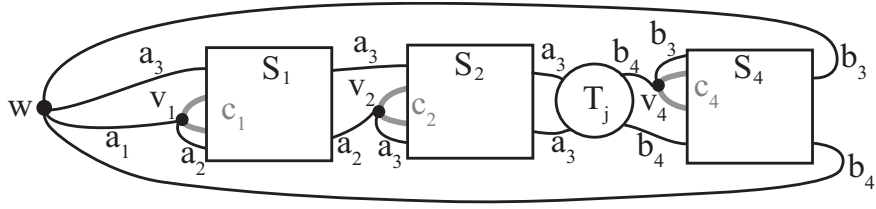}
\caption{We label the edges of $V(T')$ in this way. }
\label{exceptionalspecific}
\end{center}
\end{figure}

 In Figure~\ref{exceptionalspecific}, for $k=1$ and $k=2$ we illustrate the dangling edge at the left of $S_k$  above $v_k$, while for $k=4$ we illustrate the dangling edge at the left of $S_k$ below $v_4$.  In fact, it makes no difference which of these illustrations occur.

Now observe from Figure~\ref{exceptionalspecific} that there are no crossings between the projections of any pair of grey loops $c_k$ and $c_i$ with $i\not =k$, and hence no such pair can be linked.  Also, observe from Figure~\ref{Possibilities} that each individual $c_k$ is the numerator or denominator closure of a single strand of the rational tangle $R_k$, and so must be unknotted. In addition, the loops $a$ and $b$ are each connected sums of numerator or denominator closures of single strands of rational tangles.  Hence $a$ and $b$ are also each unknotted. Finally, $a$ has no crossings with any $c_k$ with $k>j$ and meets every $c_k$ with $k<j$ at the vertex $v_k$.  Hence $a$ cannot be linked with any $c_k$.  Similarly, $b$ cannot be linked with any $c_k$.  It follows that $V(T')$ contains no non-trivial knots or links.

In order to show that $V(T')$ is non-planar we will show that the subgraph $G$ obtained by deleting the loops $c_k$ and vertices $v_k$ for all $k>1$ with $k\not =j$ is non-planar.  The possibilities for $S_k$ with $c_k$ and $v_k$ deleted are illustrated in Figure~\ref{Deleted}.  Observe that since $R_k$ is rational, after the deletion of $c_k$, the tangle $R_k$ is left with a single unknotted strand.  Thus in $G$, each $S_k$ with $k>1$ and $k\not =j$ is a trivial horizontal tangle. 

\begin{figure}[http]
\begin{center}
\includegraphics[width=10cm]{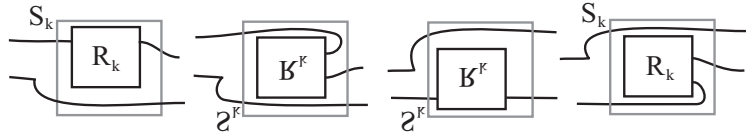}
\caption{The forms of $S_k$ after $c_k$ and $v_k$ have been deleted.}
\label{Deleted}
\end{center}
\end{figure}

\begin{figure}[http]
\begin{center}
\includegraphics[width=10cm]{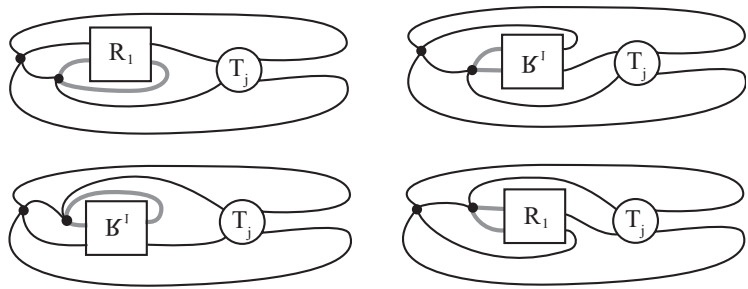}
\caption{$G$ has one of these forms. }
\label{nonplanar}
\end{center}
\end{figure}

 It now follows that the spatial graph $G$ has one of the forms illustrated in Figure~\ref{nonplanar}.  Since $T_j$ has $\infty$-parity, regardless of which form $G$ has, as an abstract graph $G$ is isomorphic to the illustration on the left of Figure~\ref{parity}.   We now let $G_0$ denote the planar embedding of $G$ illustrated on the right side of Figure~\ref{parity}, and obtain the set of associated links $S(G_0)$ by replacing the two vertices of $G_0$ by rational tangles $P$ and $Q$.  

\begin{figure}[http]
\begin{center}
\includegraphics[width=6cm]{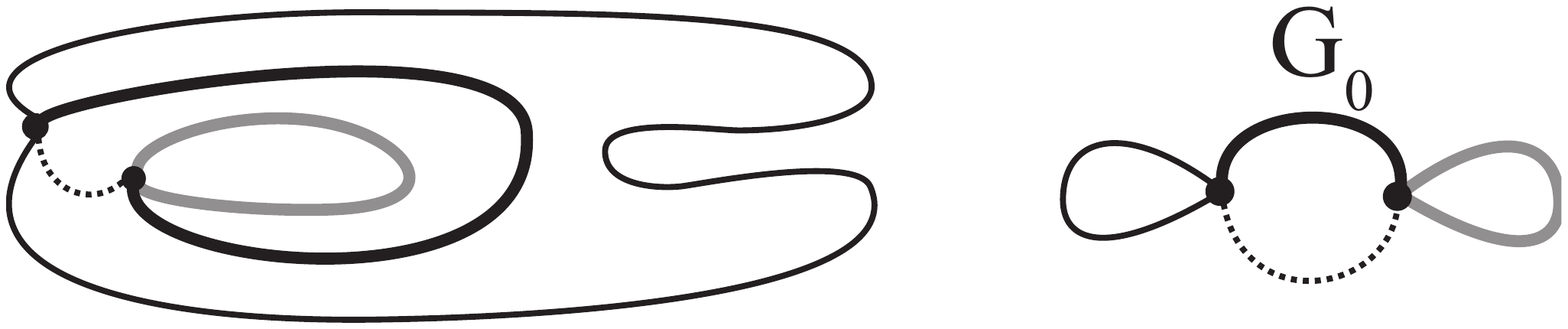}
\caption{$G$ as an abstract graph on the left, and a planar embedding $G_0$ on the right. }
\label{parity}
\end{center}
\end{figure}

The denominator closure of a rational tangle is a $2$-bridge knot or link.  Thus all of the non-trivial, non-split links in $S(G_0)$ are either the connected sum of two 2-bridge knots or links or a single $2$-bridge knot or link. Hence, by Sawollek's Theorem, to show that the spatial graph $G$ is non-planar, it suffices to show that the set of associated links $S(G)$ contains some prime knot or link which is not $2$-bridge.

  In Figure~\ref{PQ}, we replace the vertices $w$ and $v_1$ of $G$ by the rational tangles $P$ and $Q$ to get the elements of $S(G)$.  Then in Figure~\ref{R1Q}, we group the rational tangles $R_1$ and $Q$ together to create a single tangle $U$.   
       
\begin{figure}[http]
\begin{center}
\includegraphics[width=10cm]{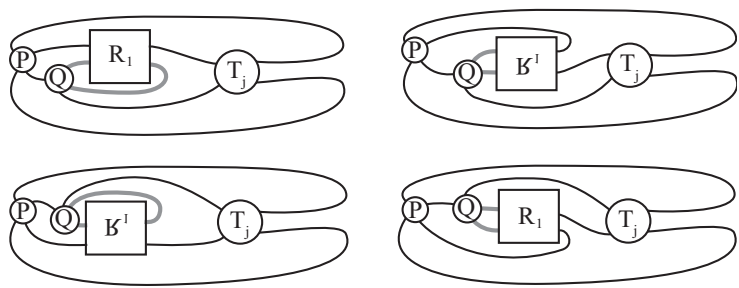}
\caption{The elements of $S(G)$ have one of these forms.}
\label{PQ}
\end{center}
\end{figure}

\begin{figure}[http]
\begin{center}
\includegraphics[width=11cm]{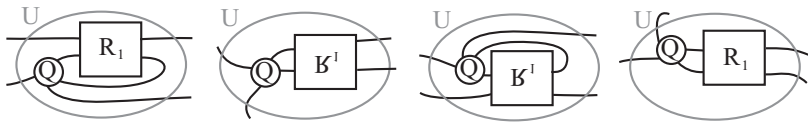}
\caption{We group $R_1$ and $Q$ together into a single tangle $U$. }
\label{R1Q}
\end{center}
\end{figure}

 Recall that $R_1$ is an alternating rational tangle with at least two crossings. Thus we can choose a rational tangle $Q$ so that $U$ is a non-trivial rational tangle which is not horizontal.  Note that the choice of $Q$ will depend on $R_1$ as well as on which form $U$ has.

 Now since $T_j$ has $\infty$-parity, it cannot be horizontal; and by hypothesis $T_j$ cannot be trivial.  Thus for any rational tangle $P$, the knot or link $L=N(P+U+T_j)$ will be the numerator closure of a Montesinos tangle, where neither $U$ nor $T_j$ is horizontal or trivial.

It follows that the double branched cover $\Sigma(L)$ is a Seifert fibered space over $S^2$, and as long as $P$ is not horizontal or trivial, $\Sigma(L)$ has three exceptional fibers.  Now by the classification of Seifert manifolds \cite{OVZ}, we can choose a rational tangle $P$ such that $\Sigma(L)$ is irreducible, not $S^1 \times S^2$, and has infinite fundamental group.  For such a $P$, we know that $L$ will be a prime link which is not $2$-bridge.  Thus $S(G)$ contains a link which is not in $S(G_0)$. It follows that $G$ is non-planar, and hence $V(T')$ must also be non-planar.  Thus we have shown that $V(T')$ is a ravel. \end{proof}

Propositions~\ref{forward} and \ref{backward} together prove Theorem~\ref{monthm}.

\bigskip

\bibliographystyle{hplain}

\end{document}